\def\m{\mathfrak{m}}
\newcommand{\vertleq}{\rotatebox{270}{$\!\!\!\leq\,\,$}}
\numberwithin{equation}{section}
\newtheorem{theorem}{Theorem}[section]
\newtheorem{lemma}[theorem]{Lemma}
\newtheorem{proposition}[theorem]{Proposition}
\theoremstyle{definition}
\newtheorem{example}[theorem]{Example}
\newtheorem{remark}[theorem]{Remark}
\newtheorem{question}[theorem]{Question}
\newcommand{\card}{\mathrm {card\, }}
\newcommand{\st}{\mathrm {{\bf st}\, }}
\newcommand{\rt}{\mathrm {{\bf rt}\, }}
\newcommand{\sar}{\mathrm {{\bf s}\, }}
\newcommand{\war}{\mathrm {{\bf w}\, }}
\newcommand{\mar}{\mathrm {{\bf m}\, }}
\newcommand{\SD}{\mathrm {SD}\,}
\newcommand{\sym}[1]{\mbox{$\mathbf{S}(#1)$}}
\newcommand{\rees}[1]{\mbox{$\mathbf{R}(#1)$}}
\newcommand{\reesw}[2]{\mbox{$\mathbf{R}(#1;#2)$}}
\newcommand{\agrw}[2]{\mbox{$\mathbf{G}(#1;#2)$}}
\def\height{\hbox{\rm ht}\,}
\def\grade{\hbox{\rm grade}\,}
\def\rad{\hbox{\rm rad}\,}
\begin{document}
\title[Sifted degrees and Artin-Rees numbers]{Sifted degrees of the equations of the Rees module
and their connection with the Artin-Rees numbers. 
}
 
\author{Philippe Gimenez}
 \address{IMUVA-Mathematics Research Institute, Universidad de Valladolid, 47011 Valladolid, Spain.}
 \email{pgimenez@uva.es}
 
\author{Francesc Planas-Vilanova}
 \address{Departament de Matemàtiques, Universitat Politècnica de Catalunya, Diagonal
647, ETSEIB, E-08028 Barcelona, Catalunya}
 \email{francesc.planas@upc.edu}

\thanks{The first author was partially supported by the grant PID2022-137283NB-C22 funded by Spanish MICIU/AEI/10.13039/501100011033 and by ERDF/EU. 
\\The second author was partially supported by projects reference PID2019-103849GB-I00 and PID2023-146936NB-I00 funded
by the Spanish State Agency MICIU/AEI/10.13039/501100011033. He is also supported by an AGAUR
grant 2021 SGR 00603. \\
{\bf Keywords}: Artin-Rees number, relation type, Rees algebra, Rees module. \\
{\bf MSC}: 13A30, 13D02}

\begin{abstract}
Let $A$ be a noetherian ring, $I$ an ideal of $A$ and $N\subset M$ finitely generated $A$-modules. The relation type of $I$ with respect to $M$, denoted by $\rt(I;M)$, is the maximal degree in a minimal generating set of relations of the Rees module $\reesw{I}{M}=\oplus_{n\geq 0}I^nM$. It is a well-known invariant that gives a first measure of the complexity of $\reesw{I}{M}$. 
To help to measure this complexity, we introduce the sifted type of $\reesw{I}{M}$, denoted by $\st(I;M)$, 
a new invariant which counts the non-zero degrees appearing in a minimal generating set of relations of $\reesw{I}{M}$. 
Just as the relation type $\rt(I;M/N)$ is closely related to the strong Artin-Rees number $\sar(N,M;I)$, it turns out that the sifted type $\st(I;M/N)$ is closely related to the medium Artin-Rees number $\mar(N,M;I)$, a new invariant which lies in between the weak and strong Artin-Rees numbers of $(N,M;I)$.
We illustrate the meaning, interest and mutual connection of $\mar(N,M;I)$ and $\st(I;M)$ with some examples. 
\end{abstract}

\maketitle

\section*{Preamble}

The Rees algebra and the other so-called blowing-up algebras have been present in some way, centrally or collaterally, in many of Wolmer V. Vasconcelos' publications, both articles and books,
since the early 1980s, when he began his long and very fruitful collaboration with Aron Simis.
They are the authors with Jürgen Herzog of the foundational papers \cite{hsv1, hsv2, hsv3} and, together with other relevant and influential algebrists,  
they located the study of these algebras as a central topic of commutative algebra.
The two authors of the present note have learned about Rees algebras (and many other topics) mainly by reading Wolmer's contributions and from many stimulating conversations with him over three decades after they had the opportunity to meet him (and each other) in Trieste in 1992. We would like to thank the editors of this volume, and especially Aron Simis, for being so patient in waiting for this contribution and for giving us the opportunity, by paying tribute to Wolmer, to work together after more than 30 years of friendship. 

\vskip 8pt
This paper is dedicated to the memory of Wolmer V. Vasconcelos.

\section{Introduction}

Given a noetherian ring $A$ and an ideal $I$ in $A$, the Rees algebra of $I$ is the subring $\rees{I}=\oplus_{n\geq 0}I^nt^n$ of the polynomial ring $A[t]$. It provides an algebraic realization of the notion of blowing-up a variety along a subvariety and hence plays an important role in the theory of singularities in algebraic geometry. From an algebraic point of view, it is an ubiquitous object linked to many constructions and problems in commutative algebra. 
It is related to the symmetric algebra $\sym{I}$ of $I$ through the graded canonical surjection $\alpha:\sym{I}\to\rees{I}$.
While the symmetric algebra takes care of the linear relations among the generators of the ideal $I$, i.e., its first syzygies, the Rees algebra takes care of all their homogeneous relations, i.e., as Vasconcelos claims in \cite[p. 195]{wolmer2}, the first syzygies of all the powers of $I$. 
More precisely, if the ideal $I$ of $A$ is generated by $x_1,\ldots,x_n$, one can consider the surjection $\varphi$ from the ring of polynomials $A[X_1,\ldots,X_n]$ onto $\rees{I}$ defined by $X_i\mapsto x_it$. The elements in $\ker(\varphi)$ are called the equations of the Rees algebra $\rees{I}$. The maximal degree in a minimal generating set of $\ker(\varphi)$ is called the relation type of $I$ and denoted by $\rt(I)$. It does not depend on the choosen generating set of $I$; see, e.g., \cite{fp-camb,raghavan,wolmer2,wang1}.
The description of $\ker(\varphi)$ is, in general, a difficult problem. In this sense, the relation type gives a first measure of the complexity of the problem of describing  the equations of the Rees algebra.
To complement the information given by the relation type, we introduce a second measure which is the number of non-zero degrees in a minimal generating set of $\ker(\varphi)$.
We call this new invariant the {\em sifted type of $I$}, and we denote it by $\st(I)$. One would like to know $\rt(I)$ and $\st(I)$ before determining the equations of the Rees algebra of $I$.

From this point of view, the simplest ideals are those whose relation type (and sifted type) is equal to 1. Note that an ideal $I$ has relation type 1 if and only if the canonical morphism $\alpha:\sym{I}\to\rees{I}$ is an isomorphism.
These ideals are said to be of linear type. According to \cite[p. 30]{wolmer2}, the terminology was introduced by Robbiano and Valla. Ideals of linear type have been extensively studied in the 1980s \cite{costa1,costa2,hmv,hrz,hsv1,hsv2,hsv3,huneke1,huneke2,kuhl,trung1,valla}, but the 
first important family of ideals of linear type is already given by Micali in the early 1960s \cite{micali1,micali2}: if $I$ is 
generated by a regular sequence, then it is of linear type. 
In \cite{huneke1} Huneke introduces the notion of $d$-sequence, a weaker condition to that of regular sequence, and shows that an ideal generated by a $d$-sequence is of linear type, a result proved  simultaneously by Valla in \cite{valla}. 
Other classes of ideals with low relation type have been studied. For example, the notion of quadratic sequence is introduced in \cite{raghavan} where ideals generated by such a sequence are shown to have relation type 2.
On the other hand, an ideal $I$ is said to be syzygetic if $\alpha_2:{\bf S}_2(I)\to I^2$ is an isomorphism. This is a strictly weaker condition to that of being of linear type.
Syzygetic ideals were introduced in \cite{mr} and have been studied or used, e.g., in \cite{br,herzog, hsv1, hsv2, sv, santi}. Describing the syzygetic condition we have, e.g., the following two results: 
if $R$ is local of depth 2 and
$I$ is an ideal of heigth 2 and projective dimension one, then $I$ is syzygetic if and only if $I$ is generated by a regular sequence of two elements 
\cite[Proposition~2.7]{hsv2}; if $R$ is regular local, with $1/2\in R$, then a Gorenstein ideal of height 3 is syzygetic \cite[Proposition~2.8]{hsv2}. (See \cite[Lemma~2.1]{fp-2022} for a recent generalization of this result.) 

There are several powerful and efficient tools to obtain some equations of the Rees algebra, though very rarely they provide the whole generating set of its equations.
The so-called Jacobian dual is one of these tools. 
It already appears in \cite{wolmer1} (see also \cite[Chapter 8]{wolmer2}) 
for ideals and has been further developed for modules in \cite{suv95}. It works as follows:
if $J$ is the $n\times r$ matrix of the first syzygies of $I=\langle x_1,\ldots,x_n\rangle$ and if $\varphi:A[X_1,\ldots,X_n]\rightarrow\rees{I}$ is a polynomial presentation of $\rees{I}$ with $L:=\ker(\varphi)$,
then $L_1$ is generated by the entries of the matrix 
$\begin{pmatrix}X_1&\cdots&X_n\end{pmatrix}\cdot J$. 
If $a=\langle a_1,\ldots,a_d\rangle$ is an ideal of $A$ containing all the entries of $J$, one can also write these generators of $L_1$ as the entries of the matrix 
$\begin{pmatrix}a_1&\cdots&a_d\end{pmatrix}\cdot B(J)$, for some
non-unique $d\times r$ matrix $B(J)$. In other words, one can think of $J$ and $B(J)$ as the Jacobian matrices of the ideal $L_1$ with respect to the
``variables" $X_1,\ldots,X_n$ and $a_1,\ldots,a_d$ respectively. Using Cramer's rule, it is easy to show that the Fitting ideal $I_r(B(J))$ is contained in $L$. Thus, one gets equations of the Rees algebra using this simple technique. When $L=\langle L_1,I_r(B(J))\rangle$, the ideal $I$ is said to have the expected equations \cite[Definition~1.8]{wolmer5}.
This has been studied for ideals of small codimension in \cite{mu}. See also \cite{gsvv} for a successful application of this technique to describe the entire Rees algebra.

As mentioned before, the relation type of an ideal $I$, combined with its sifted type, is a measure of the complexity in describing the equations of its Rees algebra and, hence, it can also be 
seen as a measure of the complexity of the ideal $I$ itself. 
This also occurs with the Castelnuovo-Mumford regularity of $I$, 
which measures the difficulty in constructing the syzygies (of any order) of $I$,
and hence guesses the degree of complexity of $I$.
For the connection between the relation type and the Castelnuovo-Mumford regularity, as well as their relationship with other invariants, see, e.g.,
\cite{bg, gp, jk, jm, mp, fp-camb, schenzel, trung2, trung3,wolmer4}. 
It is also worth mentioning that the equations of the Rees algebra are involved in some of the examples constructed in \cite{mcp} to refute the challenging Eisenbud-Goto regularity conjecture. 

The Rees algebra $\rees{I}$ is also involved in the simple proof of the Artin-Rees Lemma for ideals given in \cite[Theorem. A.1.6]{wolmer3}. Recall that the general version of the Artin-Rees Lemma for modules states that, 
for any ideal $I$ in $A$ and any two finitely generated $A$-modules $N\subseteq M$, there exists an integer $s\geq 1$ such that, 
$I^nM\cap N=I^{n-s}(I^sM\cap N)$, for all $n\geq s$. 
The notion of Rees module $\reesw{I}{M}=\oplus_{n\geq 0}I^nM$ of an ideal $I$ with respect to a module $M$
can be used in the proof of this result (see, e.g., \cite[Lemma~1.1]{fp-lisboa}).
There is a well-known connection between the Artin-Rees Lemma and the relation type, which roughly speaking establishes that 
the aforementioned integer $s$ linked to the triple $(N,M;I)$ is bounded above by the relation type of the Rees module $\reesw{I}{M/N}$ (see, e.g., 
\cite{hsv2, huneke1, huneke2, lai, fp-lisboa, wang1} and, in particular, \cite[Theorem~2]{fp-crelle}).
We would like to mention that in \cite{hsv2}, the authors observe how being of linear type gives the Artin-Rees Lemma `on the nose' as already observed in \cite{huneke2} for ideals generated by a $d$-sequence.

The Artin-Rees Lemma has raised many relevant questions of a uniform nature.
The pair of finitely generated $A$-modules $(N,M)$ with $N\subseteq M$ is said to have the strong uniform Artin–Rees property if there exists an integer $s\geq 1$ (depending on $M$ and $N$), such that for all
$n\geq s$ and all ideals $I$, $I^nM\cap N=I^{n-s}(I^sM\cap N)$.
In \cite{eh}, Eisenbud and Hochster ask if, when $A$ is an affine ring, any pair $(N,M)$ has the strong uniform Artin–Rees property with respect to maximal ideals (i.e., restricting the property to all maximal ideals of $A$).
This question has been answered by Duncan and O'Carroll in \cite{do} when $A$ is an excellent ring. Previously, O'Carroll has shown in \cite{ocarroll1} that if $A$ is an excellent ring, any pair $(N,M)$ has the weak uniform Artin–Rees property with respect to maximal ideals, where the weak uniform property asks for the weakest condition $I^nM\cap N\subseteq I^{n-s}N$. 
The same author showed later on in \cite{ocarroll2} (see also \cite{ocarroll3}) that over an arbitrary noetherian ring $A$, any pair $(N,M)$ has the strong Artin-Rees uniform property with respect to principal ideals: there exists an integer $s\geq 1$ such that for all
$n\geq s$ and all $x\in A$, $x^nM\cap N=x^{n-s}(x^sM\cap N)$. 
Inspired by these results, Huneke proves in \cite{huneke3} several kinds of uniform behaviour and conjectures
that excellent noetherian rings of finite Krull dimension have the weak uniform Artin-Rees property (see also \cite{huneke4} and \cite{trivedi}). For the uniform Artin-Rees property with respect to prime ideals, see \cite{op}. With respect to the whole set of ideals of $A$, see \cite{fp-crelle} (and also \cite{striuli}).
Going back to the Rees algebra, a uniform property of the relation type is raised by Huneke: if $A$ is a complete local equidimensional noetherian ring,
does there exist a uniform number $N$ such that for any ideal $I$ generated by a system of parameters, 
$\rt(I)\leq N$? Partial positives answers to this question are given in \cite{agh,lai,wang1,wang2}. 

In this note, we focus on the relationship between 
the Artin-Rees numbers and the degrees of the defining equations of the Rees module of an ideal with respect to a module. Associated to a triple $(N,M;I)$ formed by two finitely generated modules $N\subseteq M$ and an ideal $I$, there are two known invariants, the strong and the weak Artin-Rees numbers.
In Section \ref{sec-invariants}, we recall the definitions of these two invariants and introduce the {\em medium Artin-Rees number}, a new invariant located 
between the weak and the strong Artin-Rees numbers (Proposition \ref{prop-arnumbers}).
On the other hand, we recall the definition of the relation type of an ideal $I$ with respect to a module $P$, 
and introduce the {\em sifted type of $I$ with respect to $P$}, a smaller invariant. 
The connection among these five invariants is explained in Theorem~\ref{thm-summ}.
In Section \ref{sec-firstEx}, we give several families of examples to illustrate the behavior of these invariants. In Section \ref{sec-fibo}, we focus on the case of a polynomial ideal $I$ in 2 variables minimally generated by 3 monomials of the same degree with no common factor. 
We take advantage of the minimal bigraded free resolution of $\rees{I}$ constructed in \cite{cda}, to compute the values of the relation type and the sifted type of $I$ (Proposition~\ref{3monomials}). 
(For a very recent generalization of \cite{cda}, see \cite{ioss}, where the three monomials can have different degrees).
Proposition~\ref{3monomials} allows us to give several interesting examples, 
where the difference between the relation type and the sifted type can be arbitrarily large.
In particular, as a consequence of Example \ref{fibonacci}, we deduce that, given any integer $\ell$, we can easily obtain an ideal of sifted degree equal to $\ell$. Finally, in Section \ref{sec-moreEx} we revisit three classical examples to stand out their sifted type and relation type.

We are aware that the list of references included here is by no means exhaustive, it is just a brief selection limited by our knowledge and time frame.

\section{Some invariants linked to the triple 
\texorpdfstring{$(N,M;I)$}{(N,M;I)}.}\label{sec-invariants}

Let $A$ be a noetherian ring, $I$ an ideal of $A$, and $N\subseteq M$ finitely generated $A$-modules. 
Let us recall (as well as introduce) the definition of some (new) 
invariants associated to these modules and to the ideal $I$ 
(see, e.g., \cite{hsv2}, \cite{huneke1}, \cite{fp-lisboa} and the references there-in).

\vspace*{0.3cm}

For ease of further reference, we denote and call the $n$-th Artin-Rees module, $n\geq 1$, the quotient:
\begin{eqnarray*}
E(N,M;I)_n\stackrel{\rm def}{=}(I^nM\cap N)/I(I^{n-1}M\cap N).
\end{eqnarray*}

\vspace*{0.3cm}

Associated to the triple $(N,M;I)$ we have the following three invariants.  

\vspace*{0.3cm}

\noindent $\bullet$ \underline{The strong Artin-Rees number $\sar(N,M;I)$}. 

\vspace*{0.3cm}

\noindent The Artin-Rees Lemma states that there exists an integer $s\geq 1$ such that
\begin{eqnarray}\label{eq-ar}
I^nM\cap N=I^{n-s}(I^sM\cap N),\mbox{ for all }n\geq s.
\end{eqnarray}
In particular, for all $n\geq t\geq s$, 
\begin{eqnarray*}
I^nM\cap N=I^{n-s}(I^sM\cap N)=I^{n-t}(I^{t-s}(I^sM\cap N))=I^{n-t}(I^tM\cap N).
\end{eqnarray*}
Thus, if the equality \eqref{eq-ar} holds for $s\geq 1$, then it also holds for any $t\geq s$.
Hence, it is natural to define the strong Artin-Rees number of $(N,M;I)$ as:
\begin{eqnarray*}
\sar(N,M;I)&\stackrel{\rm def}{=}&\min\{s\geq 1\mid I^nM\cap N=I^{n-s}(I^sM\cap N),\mbox{ for all }n\geq s\}\\
&=&\min\{s\geq 1\mid I^nM\cap N=I(I^{n-1}M\cap N),\mbox{ for all }n\geq s+1\}\\
&=&\min\{s\geq 1\mid E(N,M;I)_n=0,\mbox{ for all }n\geq s+1\}.
\end{eqnarray*}

\vspace*{0.3cm}

\noindent $\bullet$ \underline{The weak Artin-Rees number $\war(N,M;I)$}.

\vspace*{0.3cm}

\noindent As a consequence of the Artin-Rees Lemma, one deduces that there exists an integer $s\geq 1$ such that
$I^nM\cap N\subseteq I^{n-s}N$ for all $n\geq s$. In particular, for all 
$n\geq t\geq s$, 
\begin{eqnarray*}
I^nM\cap N\subseteq I^{n-s}N\subseteq I^{n-t}N.
\end{eqnarray*}
Thus, $I^nM\cap N\subseteq I^{n-t}N$, for all $n\geq t$.
The weak Artin-Rees number is then defined as:
\begin{eqnarray*}
\war(N,M;I)&\stackrel{\rm def}{=}&\min\{s\geq 1\mid I^nM\cap N\subseteq I^{n-s}N\mbox{ for all }n\geq s\}.
\end{eqnarray*}

\vspace*{0.3cm}

\noindent $\bullet$ \underline{The medium Artin-Rees number $\m(N,M;I)$}.

\vspace*{0.3cm}

\noindent In between the strong and the weak Artin-Rees numbers, we consider a new invariant, the {\em medium Artin-Rees number}, defined as follows: 
\begin{eqnarray*}
\mar(N,M;I)&\stackrel{\rm def}{=}&1+\card\{n\geq 2\mid E(N,M;I)_n\neq 0\}.
\end{eqnarray*}
\begin{proposition}\label{prop-arnumbers}
\begin{eqnarray*}
\war(N,M;I)\leq \mar(N,M;I)\leq \sar(N,M;I).
\end{eqnarray*}
\end{proposition}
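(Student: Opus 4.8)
The plan is to prove the two inequalities separately, the right-hand one being essentially formal and the left-hand one requiring a short telescoping estimate. For $\mar(N,M;I)\leq\sar(N,M;I)$: set $s=\sar(N,M;I)$ and use the last of the three descriptions of $\sar$ recalled above, namely $E(N,M;I)_n=0$ for all $n\geq s+1$. Then the set $\{n\geq 2\mid E(N,M;I)_n\neq 0\}$ is contained in $\{2,\ldots,s\}$, so it has at most $s-1$ elements, and adding $1$ gives $\mar(N,M;I)\leq s$. I expect no difficulty in this step.

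For $\war(N,M;I)\leq\mar(N,M;I)$ I would first fix notation: write $C_n=I^nM\cap N$ for $n\geq 0$, so that $C_0=N$, record the elementary containments $IC_{n-1}\subseteq C_n\subseteq C_{n-1}$ for $n\geq 1$, and observe that $E(N,M;I)_n=C_n/IC_{n-1}$, so $E(N,M;I)_n=0$ exactly when $C_n=IC_{n-1}$. Put $S=\{n\geq 2\mid E(N,M;I)_n\neq 0\}$ and $r=\card S$, so that $\mar(N,M;I)=r+1=:m$. The core of the argument is a telescoping claim, proved by downward induction on $j$ from $n$ to $1$: for every $n\geq 1$ one has $C_n\subseteq I^{a_j}C_j$, where $a_j=\card\{i\in\{j+1,\ldots,n\}\mid i\notin S\}$; in the inductive step one pulls out one power of $I$ using $C_j=IC_{j-1}$ when $j\notin S$, and uses only $C_j\subseteq C_{j-1}$ when $j\in S$. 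Taking $j=1$ and using $C_1\subseteq C_0=N$ yields $I^nM\cap N=C_n\subseteq I^{a_1}N$ with $a_1=\card\{i\in\{2,\ldots,n\}\mid i\notin S\}\geq(n-1)-r=n-m$, hence $I^nM\cap N\subseteq I^{n-m}N$ for all $n\geq m$. Thus $m$ satisfies the condition defining $\war(N,M;I)$, and therefore $\war(N,M;I)\leq m=\mar(N,M;I)$.

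The only step carrying any real content is the telescoping induction, and even there the work is just careful bookkeeping of the indices $i\in S$ at which no factor of $I$ can be extracted; I do not anticipate a genuine obstacle.
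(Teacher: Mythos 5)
Your proof is correct and follows essentially the same strategy as the paper's: the right-hand inequality is the same formal counting argument, and for the left-hand inequality both arguments extract one factor of $I$ at each degree $j$ with $E(N,M;I)_j=0$ (using $I^jM\cap N=I(I^{j-1}M\cap N)$ there) and merely use the containment $I^jM\cap N\subseteq I^{j-1}M\cap N$ at the finitely many degrees where the Artin--Rees module does not vanish. Your single downward induction with the counter $a_j$ is a cleaner, more uniform packaging of the paper's explicit case analysis over the intervals determined by the indices $m_2<\cdots<m_r$.
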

\begin{proof}
Note that, if $\sar(N,M;I)=r$, $r\geq 1$, then $E(N,M;I)_n=0$, for all $n\geq r+1$. Thus,
the set of indices $n\geq 2$ with $E(N,M;I)_n\neq 0$ is included in $\{2,\ldots ,r\}$. Therefore, 
$\mar(N,M;I)\leq r=\sar(N,M;I)$. 

Now, let us prove that $\war(N,M;I)\leq \mar(N,M;I)$.
Observe that, if for a given pair of integers $n>m\geq 1$,
$E(N,M;I)_{m+1}=\ldots=E(N,M;I)_{n}=0$, then 
\begin{eqnarray}\label{eq-delta}
I^{n}M\cap N=I(I^{n-1}M\cap N)=\ldots =I^{n-m}(I^mM\cap N)\subseteq I^{n-m}(I^{m-1}M\cap N).
\end{eqnarray}
If $\mar(N,M;I)=1$, then $E(N,M;I)_n=0$, for all $n\geq 2$. 
Using \eqref{eq-delta}, with $m=1$, we get $I^nM\cap N\subseteq I^{n-1}N$, for all $n\geq 2$,
so $\war(N,M;I)\leq 1=\mar(N,M;I)$. 

Suppose that $\mar(N,M;I)=r\geq 2$. Then, 
there exist $2\leq m_2<\ldots<m_r$ with $E(N,M;I)_{m_i}\neq 0$, for $i=2,\ldots, r$, 
and such that $E(N,M;I)_n=0$, for all $n\geq 2$, $n\not\in\{m_2,\ldots,m_r\}$. 
Let $n\geq r$. If $2\leq n<m_2$, then, for all $2\leq m\leq n$, 
$E(N,M;I)_m=0$, so $I^nM\cap N=I^{n-1}(IM\cap N)$.
In particular, $I^nM\cap N\subseteq I^{n-1}N\subseteq I^{n-r}N$. If $m_2\leq n<m_3$, 
using twice \eqref{eq-delta}, we get:
\begin{multline*}
I^nM\cap N=I^{n-m_2}(I^{m_2}M\cap N)\subseteq I^{n-m_2}(I^{m_2-1}M\cap N)
\\=I^{n-m_2}(I^{m_2-2}(IM\cap N))=I^{n-2}(IM\cap N).
\end{multline*}
In particular, $I^nM\cap N\subseteq I^{n-2}N\subseteq I^{n-r}N$. Suppose that $m_{r-1}\leq n<m_r$. 
As before, using \eqref{eq-delta}, and by induction,
\begin{multline}\label{eq-penultima}
I^nM\cap N=I^{n-m_{r-1}}(I^{m_{r-1}}M\cap N)\subseteq I^{n-m_{r-1}}(I^{m_{r-1}-1}M\cap N)
\\\subseteq I^{n-m_{r-1}}(I^{m_{r-1}-(r-1)}(IM\cap N))=I^{n-(r-1)}(IM\cap N).
\end{multline}
In particular, $I^nM\cap N\subseteq I^{n-(r-1)}N\subseteq I^{n-r}N$. Finally, suppose that $n\geq m_r$. 
Again, using \eqref{eq-delta} and \eqref{eq-penultima}, we deduce
\begin{multline*}
I^nM\cap N=I^{n-m_{r}}(I^{m_r}M\cap N)\subseteq I^{n-m_r}(I^{m_r-1}M\cap N)\\
\subseteq I^{n-m_r}(I^{(m_r-1)-(r-1)}(IM\cap N))=I^{n-r}(IM\cap N).
\end{multline*}
Therefore, $I^nM\cap N\subseteq I^{n-r}N$, for all $n\geq r$, so $\war(N,M;I)\leq \mar(N,M;I)$.
\end{proof}

\vspace*{0.4cm}

Now we turn to relate these Artin-Rees numbers with the degrees of the equations of the Rees modules. 
Let $\sym{I}$ be the symmetric algebra of $I$. Given a finitely generated $A$-module $P$, 
consider the Rees module $\reesw{I}{P}=\oplus_{n\geq 0}I^nP$ of $I$ with respect to $P$. Let  $\alpha:\sym{I}\to\rees{I}$ be the canonical graded surjective morphim from the symmetric algebra of $I$ 
to the Rees algebra of $I$.
Let $\pi:\rees{I}\otimes P\rightarrow\reesw{I}{P}$ be the natural graded surjective morphim of graded $\sym{I}$-modules, given by the surjection $I^{n}\otimes P\to I^nP$. Then,
\begin{eqnarray*}
\gamma:\sym{I}\otimes P\stackrel{\alpha\otimes 1}{\longrightarrow}\rees{I}\otimes P
\stackrel{\pi}{\rightarrow}\reesw{I}{P}
\end{eqnarray*}
is a surjective graded morphism of graded $\sym{I}$-modules. 
The module of effective $n$-relations of $I$ with respect to $P$, $n\geq 2$, is defined as 
\begin{eqnarray*}
E(I;P)_n\stackrel{\rm def}{=}\ker(\gamma_n)/{\bf S}_1(I)\cdot \ker(\gamma_{n-1}),
\end{eqnarray*}
where $\gamma_n:{\bf S}_n(I)\otimes P\to I^nP$ stands for the $n$-th graded component of $\gamma$.

If  $\rho:\sym{F}\otimes G\to\reesw{I}{P}$ is any other symmetric presentation (e.g., $F$ and $G$ free modules),
then one can show that
$\ker(\rho_n)/{\bf S}_1(F)\cdot \ker(\rho_{n-1})$ is isomorphic to $E(I;P)_n$ 
(see, e.g., \cite[Lemma~2.3 and Definition~2.4]{fp-crelle}). 
Thus, $E(I;P)_n$ describes all those relations of degree $n$ of $\reesw{I}{P}$ which can not be obtained 
by relations of lower degree. If $P=A$, we just write $E(I)$ for $E(I;A)$.

The next result links effective relations with Artin-Rees modules. 

\begin{proposition}\label{prop-key}
For each $n\geq 2$, we have the short exact sequence of $A$-modules:
\begin{eqnarray}\label{eqkey}
E(I;M)_{n}\rightarrow E(I;M/N)_{n}\stackrel{\varphi}{\rightarrow}
E(N,M;I)_{n}\rightarrow 0.
\end{eqnarray}
If $I$ is generated by the elements $x_1,\ldots,x_r$, then the morphism $\varphi$ sends the class of an effective $n$-relation 
$\sum_{|\alpha|=n}\overline{m}_\alpha T^{\alpha}\in M/N[T_1\ldots,T_r]$
to the class of the element $\sum_{|\alpha|=n}m_{\alpha}x^{\alpha}$ in the module $E(N,M;I)_n=(I^{n}M\cap N)/I(I^{n-1}M\cap N)$.
\end{proposition}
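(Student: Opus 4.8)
The plan is to model all three objects by compatible polynomial presentations and then apply the snake lemma. Fix generators $x_1,\dots,x_r$ of $I$, and for a finitely generated $A$-module $P$ let $\rho^P\colon P[X_1,\dots,X_r]\twoheadrightarrow\reesw{I}{P}$ be the graded surjection with $pX^\alpha\mapsto px^\alpha\in I^{|\alpha|}P$; by the remark preceding the statement (\cite[Lemma~2.3 and Definition~2.4]{fp-crelle}, applied with $F=A^r$ and $G=P$) one has $E(I;P)_n\cong\ker(\rho^P_n)/(X_1,\dots,X_r)\ker(\rho^P_{n-1})$, which is the degree-$n$ part of $\ker(\rho^P)\otimes_{A[X]}A$, where $A[X]=A[X_1,\dots,X_r]$ and $\ker(\rho^P)$ denotes the graded kernel. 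Apply this with $P=N,M,M/N$, linked by $0\to N\to M\to M/N\to 0$. Tensoring this sequence over $A$ with the free module $A[X]_n$ keeps it short exact, while among the targets the natural maps $I^nN\hookrightarrow I^nM\twoheadrightarrow I^n(M/N)$ satisfy $\ker(I^nM\to I^n(M/N))=I^nM\cap N$ (using $I^n(M/N)=(I^nM+N)/N$). So for each $n$ there is a commutative diagram with exact rows, the top row being $0\to N[X]_n\to M[X]_n\to(M/N)[X]_n\to 0$, the bottom row being $0\to I^nM\cap N\to I^nM\to I^n(M/N)\to 0$, and the vertical maps being $\sigma_n$ (i.e. $\rho^N_n$ followed by $I^nN\hookrightarrow I^nM\cap N$), $\rho^M_n$, and $\rho^{M/N}_n$; note $\coker(\sigma_n)=(I^nM\cap N)/I^nN$.

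Since $\rho^M_n$ and $\rho^{M/N}_n$ are surjective, the snake lemma gives, for each $n$, an exact sequence $\ker(\rho^M_n)\to\ker(\rho^{M/N}_n)\stackrel{\delta_n}{\to}(I^nM\cap N)/I^nN\to 0$, and these assemble into an exact sequence of graded $A[X]$-modules $\ker(\rho^M)\to\ker(\rho^{M/N})\to Q\to 0$ with $Q=\oplus_n(I^nM\cap N)/I^nN$, the first map being the one induced by $N\subseteq M$. Applying the right-exact functor ``$(-\otimes_{A[X]}A)_n$'' turns $\ker(\rho^P)$ into $E(I;P)_n$ and turns $Q$ into $Q_n/(X_1,\dots,X_r)Q_{n-1}=(I^nM\cap N)/I(I^{n-1}M\cap N)=E(N,M;I)_n$, the key point being $I^nN\subseteq I(I^{n-1}M\cap N)$; this produces exactly the sequence \eqref{eqkey}, with the final map $\varphi$ induced by $\delta_n$. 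Tracing $\delta_n$ gives the stated formula: the class of $\xi=\sum_{|\alpha|=n}\overline m_\alpha X^\alpha\in\ker(\rho^{M/N}_n)$ lifts to $\eta=\sum m_\alpha X^\alpha\in M[X]_n$, and since $\sum\overline m_\alpha\overline x^\alpha=0$ in $M/N$ we get $\rho^M_n(\eta)=\sum m_\alpha x^\alpha\in I^nM\cap N$, whose class in $E(N,M;I)_n$ is $\varphi(\overline\xi)$.

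Two verifications remain. First, $\delta_n$ kills $(X_1,\dots,X_r)\ker(\rho^{M/N}_{n-1})$, which is immediate since $x_i\cdot(I^{n-1}M\cap N)\subseteq I(I^{n-1}M\cap N)$. Second, and this is the crux, exactness of \eqref{eqkey} at $E(I;M/N)_n$: concretely, if $\xi$ as above satisfies $\sum m_\alpha x^\alpha\in I(I^{n-1}M\cap N)$, write $\sum m_\alpha x^\alpha=\sum_i x_iw_i$ with $w_i\in I^{n-1}M\cap N$, pick $\zeta_i\in M[X]_{n-1}$ with $\rho^M_{n-1}(\zeta_i)=w_i$, and set $\zeta=\sum_i X_i\zeta_i$; then $\eta-\zeta\in\ker(\rho^M_n)$, while each $\zeta_i$ reduces modulo $N$ into $\ker(\rho^{M/N}_{n-1})$ because $w_i\in N$, so the class of $\eta-\zeta$ in $E(I;M)_n$ maps to $\overline\xi$ in $E(I;M/N)_n$. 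I expect the main obstacle to be precisely this: arranging the three compatible presentations so that the snake lemma sees $I^nM\cap N$ — not $I^nN$ — in the corner, and then reconciling the quotient $(I^nM\cap N)/I^nN$ with $E(N,M;I)_n$ when passing to effective relations; once that is in place, the rest is formal.
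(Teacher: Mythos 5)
Your proposal is correct, and it is a genuine proof rather than the citation the paper offers: the paper's ``proof'' simply refers the existence of the sequence \eqref{eqkey} and the description of $\varphi$ to \cite[Lemma~2.3 and Theorem~2]{fp-crelle}, whereas you reconstruct the underlying argument in full. Your mechanism (three compatible polynomial presentations, the snake lemma producing $\ker(\rho^M_n)\to\ker(\rho^{M/N}_n)\to(I^nM\cap N)/I^nN\to 0$, then right-exactness of $-\otimes_{A[X]}A[X]/(X_1,\ldots,X_r)$) is essentially the same device as in the cited lemma, so this is the standard route written out rather than a new one; what it buys is self-containedness and an explicit formula for $\varphi$ as the connecting map, which is exactly what the paper asks the reader to ``trace back.'' The two delicate points are both handled correctly: the bottom-left corner of your diagram must be $I^nM\cap N$ (not $I^nN$) so that the snake lemma's cokernel is $(I^nM\cap N)/I^nN$, and the passage from that cokernel to $E(N,M;I)_n$ after tensoring uses precisely the inclusion $I^nN\subseteq I(I^{n-1}M\cap N)$, which you state. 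Your hand verification of exactness at $E(I;M/N)_n$ is also correct (and is in fact already guaranteed by the right-exactness of the tensor functor applied to the graded exact sequence, so it serves as a consistency check rather than an additional needed step). The only implicit input you rely on, namely that $E(I;P)_n$ can be computed from the presentation $P[X_1,\ldots,X_r]\twoheadrightarrow\reesw{I}{P}$, is exactly the independence-of-presentation statement the paper records just before the proposition, so invoking it is legitimate.
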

\begin{proof}
The existence of the exact sequence follows from \cite[Lemma~2.3 and Theorem~2]{fp-crelle} or \cite[Section~6]{fp-lisboa}. 
To calculate the image 
of an element of $E(I;M/N)_n$ in $E(N,M;I)_n$, just trace back the definition of the morphism $\varphi$ 
in \cite[Lemma~2.3]{fp-crelle}.
\end{proof}

In connection with the degrees of the effective relations, one has two invariants. 
The first one is classical, while the second is new.

\vspace*{0.3cm}

\noindent $\bullet$ \underline{The relation type $\rt(I;P)$ of $I$ with respect to $P$}.

\vspace*{0.3cm}

\noindent The relation type of $I$ with respect to $P$ can be defined as
\begin{eqnarray*}
\rt(I;P)&\stackrel{\rm def}{=}&\min\{\, r\geq 1\mid E(I;P)_{n}=0, \mbox{ for all } n\geq r+1\}.
\end{eqnarray*}
So, $\rt(I;P)$ is the maximum degree in a minimal generating set of relations of the Rees module 
$\reesw{I}{P}$. If $P=A$, we just write $\rt(I)$ for $\rt(I;A)$.
From the exact sequence \eqref{eqkey}, one easily deduce the inequalities
(see \cite[Theorem~2]{fp-crelle}):
\begin{eqnarray}\label{arrt}
\sar(N,M;I)\leq \rt(I;M/N)\leq {\rm max}(\rt(I;M),\sar(N,M;I)).
\end{eqnarray}

\vspace*{0.3cm}

\noindent $\bullet$ \underline{The sifted type $\st(I;P)$ of $I$ with respect to $P$}.

\vspace*{0.3cm}

\noindent The set $\SD(I;P)$ of sifted degrees of $I$ with respect to $P$ is defined as the set of integers 
\begin{eqnarray*}
\SD(I;P)&\stackrel{\rm def}{=}&\{1\}\cup \{n\geq 2\mid E(I;P)_n\neq 0\}.
\end{eqnarray*}
The {\em sifted type of $I$ with respect to $P$} is defined as
\begin{eqnarray*}
\st(I;P)&\stackrel{\rm def}{=}&\card(\SD(I;P))=1+\card \{\, n\geq 2\mid E(I;P)_{n}\neq 0\}.
\end{eqnarray*}
We add the integer 1 in $\SD(I;P)$
to count the possible non-zero syzygies of $IP$, which can be seen as the relations of degree 1. 
Even if there are no relations in degree 1, we want $\st(I;P)$, as well as $\rt(I;P)$, to be at least 1.
As before, if $P=A$, then we just write $\SD(I)$ and $\st(I)$. 

From the definition and the exact sequence \eqref{eqkey}, it is readily seen that:

\begin{lemma}
\begin{eqnarray*}
\st(I;P)\leq \rt(I;P).
\end{eqnarray*}   
\end{lemma}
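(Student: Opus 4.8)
The plan is to observe that this is an immediate counting consequence of the two definitions, in exact parallel with the argument for $\mar(N,M;I)\leq \sar(N,M;I)$ in Proposition~\ref{prop-arnumbers}. Write $r=\rt(I;P)$. By definition of the relation type, $E(I;P)_n=0$ for all $n\geq r+1$, so every integer $n\geq 2$ with $E(I;P)_n\neq 0$ must satisfy $2\leq n\leq r$. Hence the set $\{n\geq 2\mid E(I;P)_n\neq 0\}$ is contained in $\{2,\ldots,r\}$ and therefore has cardinality at most $r-1$.

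Plugging this into the definition of the sifted type, we get
\begin{eqnarray*}
\st(I;P)=1+\card\{n\geq 2\mid E(I;P)_n\neq 0\}\leq 1+(r-1)=r=\rt(I;P),
\end{eqnarray*}
which is the claimed inequality. (Equivalently, $\SD(I;P)=\{1\}\cup\{n\geq 2\mid E(I;P)_n\neq 0\}\subseteq\{1,2,\ldots,r\}$, so $\st(I;P)=\card(\SD(I;P))\leq r$.)

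There is no real obstacle here: the statement follows purely formally from the fact that $\rt(I;P)$ bounds the top nonzero degree while $\st(I;P)$ counts how many nonzero degrees occur (plus the conventional degree~$1$), so the only thing one uses is that a subset of $\{1,\ldots,r\}$ has at most $r$ elements. If anything, the point worth flagging is that the inequality can be strict — and in fact arbitrarily strict — which is exactly why $\st(I;P)$ carries information not contained in $\rt(I;P)$; the examples in Sections~\ref{sec-firstEx} and~\ref{sec-fibo} are devoted to making that gap large.
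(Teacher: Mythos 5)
Your argument is correct and is essentially identical to the paper's own proof: both set $r=\rt(I;P)$, use $E(I;P)_n=0$ for $n\geq r+1$ to conclude $\SD(I;P)\subseteq\{1,\ldots,r\}$, and deduce $\st(I;P)\leq r$ by counting. Nothing further is needed.
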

\begin{proof}
Suppose that $\rt(I;P)=r$. Then $E(I;P)_n=0$, for all $n\geq r+1$. Hence,
$\SD(I;P)\subseteq\{1,\ldots,r\}$, $\st(I;P)\leq r$ and $\st(I;P)\leq \rt(I;P)$. 
\end{proof}

In an analogy to \cite[Theorem~2]{fp-crelle}, recalled previously in \eqref{arrt}, we have the following result. 

\begin{proposition}\label{stmenor}
\begin{eqnarray*}\label{arnt}
\mar(N,M;I)\leq \st(I;M/N)\leq \st(I;M)+\mar(N,M;I))-1.
\end{eqnarray*}   
\end{proposition}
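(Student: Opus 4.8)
The plan is to read off both inequalities directly from the short exact sequence \eqref{eqkey} of Proposition~\ref{prop-key}, by comparing the \emph{supports} of the three families of effective relations (the sets of degrees $n\geq 2$ in which they do not vanish) and then performing the elementary bookkeeping with the ``$+1$'' built into the definitions of $\st$ and $\mar$.

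First I would establish the left-hand inequality. Since $\varphi\colon E(I;M/N)_n\to E(N,M;I)_n$ is surjective for every $n\geq 2$ by Proposition~\ref{prop-key}, whenever $E(I;M/N)_n=0$ we also have $E(N,M;I)_n=0$. Equivalently,
\[
\{n\geq 2\mid E(N,M;I)_n\neq 0\}\subseteq\{n\geq 2\mid E(I;M/N)_n\neq 0\}.
\]
Taking cardinalities and adding $1$ to both sides gives $\mar(N,M;I)\leq\st(I;M/N)$ by the very definitions of these two invariants.

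Next I would treat the right-hand inequality. Exactness of \eqref{eqkey} at the middle term says that $\ker\varphi$ is the image of $E(I;M)_n$; hence if both $E(I;M)_n=0$ and $E(N,M;I)_n=0$, then $E(I;M/N)_n=0$. Contrapositively,
\[
\{n\geq 2\mid E(I;M/N)_n\neq 0\}\subseteq\{n\geq 2\mid E(I;M)_n\neq 0\}\cup\{n\geq 2\mid E(N,M;I)_n\neq 0\},
\]
so $\card\{n\geq 2\mid E(I;M/N)_n\neq 0\}\leq\card\{n\geq 2\mid E(I;M)_n\neq 0\}+\card\{n\geq 2\mid E(N,M;I)_n\neq 0\}$. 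Adding $1$ to both sides and rewriting the two right-hand summands as $\st(I;M)-1$ and $\mar(N,M;I)-1$ yields $\st(I;M/N)\leq\st(I;M)+\mar(N,M;I)-1$.

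There is essentially no hard step here: once Proposition~\ref{prop-key} is available, the argument is a routine inclusion-of-supports computation, entirely parallel to the derivation of \eqref{arrt}. The only point requiring a little care is that the first arrow in \eqref{eqkey} need not be injective, so one cannot expect equalities of supports (which would fail in general); only the two inclusions above are available, and they are exactly what is needed. A second, purely cosmetic, subtlety is keeping track of the constant $1$ built into $\SD(I;\cdot)$ and into $\mar$, which is why the upper bound reads $\st(I;M)+\mar(N,M;I)-1$ rather than $\st(I;M)+\mar(N,M;I)$.
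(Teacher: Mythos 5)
Your proposal is correct and follows essentially the same route as the paper: both inequalities are extracted from the exact sequence \eqref{eqkey} by observing that the support of $E(N,M;I)$ is contained in that of $E(I;M/N)$, which in turn is contained in the union of the supports of $E(I;M)$ and $E(N,M;I)$, and then counting. The paper merely writes out the nonzero degrees as explicit finite lists $\{m_2,\ldots,m_r\}$ instead of phrasing the bookkeeping as inclusions of supports, so the two arguments are the same in substance.
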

\begin{proof}
Let $\st(I;M/N)=r$, where $\SD(I;M/N)=\{1,m_2,\ldots,m_r\}$, $1<m_2<\ldots<m_r$. 
Using the exact sequence \eqref{eqkey}, we deduce that  
$E(N,M;I)_n=0$, for all $n\geq 2$, $n\not\in\{m_2,\ldots,m_r\}$. Therefore,
$\mar(N,M;I)\leq 1+(r-1)=r=\st(I;M/N)$.

Now, let $\st(I;M)=s$, so $\SD(I;M)=\{1,n_2,\ldots,n_s\}$, 
for some $1<n_2<\ldots<n_s$. 
Let $\mar(N,M;I)=t$. Thus, there are $2\leq m_2<\ldots<m_t$ such that 
$E(N,M;I)_{m_{j}}\neq 0$, for $j=2,\ldots,t$, and $E(N,M;I)_m=0$, for all $m\geq 2$, $m\not\in \{m_2,\ldots,m_t\}$.
Using the exact sequence \eqref{eqkey}, we deduce that
$E(I;M/N)_n=0$, for all $n\geq 2$, $n\not\in\{n_2,\ldots,n_s\}\cup\{m_2,\ldots,m_t\}$. Therefore,
$\st(I;M/N)\leq 1+(s-1)+(t-1)=\st(I;M)+\mar(N,M;I)-1$.
\end{proof}

A special case takes place when $I$ is an ideal of linear type with respect to $M$, 
i.e., when $\rt(I;M)=1$ (see, e.g., \cite{hsv2}). 

\begin{proposition}\label{prop-linear}
Let $I$ be an ideal of linear type with respect to $M$. Then $E(I;M/N)_n=E(N,M;I)_n$, for all $n\geq 2$. 
In particular, $\sar(N,M;I)=\rt(I;M/N)$ and $\mar(N,M;I)=\st(I;M/N)$. 
\end{proposition}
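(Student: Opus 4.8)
The plan is to exploit the exact sequence \eqref{eqkey} from Proposition~\ref{prop-key} together with the hypothesis that $I$ is of linear type with respect to $M$, which by definition means $\rt(I;M)=1$, i.e.\ $E(I;M)_n=0$ for all $n\geq 2$. First I would substitute this vanishing into the three-term exact sequence
\[
E(I;M)_{n}\rightarrow E(I;M/N)_{n}\stackrel{\varphi}{\rightarrow}E(N,M;I)_{n}\rightarrow 0
\]
valid for every $n\geq 2$. Since the leftmost term is zero, the sequence degenerates to
\[
0\rightarrow E(I;M/N)_{n}\stackrel{\varphi}{\rightarrow}E(N,M;I)_{n}\rightarrow 0,
\]
so $\varphi$ is an isomorphism $E(I;M/N)_n\xrightarrow{\ \sim\ }E(N,M;I)_n$ for all $n\geq 2$. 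This is the one substantive point, and it is immediate once one has Proposition~\ref{prop-key}; the only thing to be careful about is that the exactness statement in \eqref{eqkey} is exactly ``exact at the middle term and surjective on the right'', so that the vanishing of the left term forces injectivity of $\varphi$ as well. There is no real obstacle here.

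Having the isomorphism $E(I;M/N)_n\cong E(N,M;I)_n$ for all $n\geq 2$, the two ``in particular'' assertions are formal consequences of the definitions of the four invariants involved. For the relation type: $\rt(I;M/N)=\min\{r\geq 1\mid E(I;M/N)_n=0 \text{ for all } n\geq r+1\}$, and by the isomorphism the condition $E(I;M/N)_n=0$ is equivalent to $E(N,M;I)_n=0$ for each $n\geq 2$; comparing with the third description of $\sar(N,M;I)$ recalled in Section~\ref{sec-invariants}, namely $\sar(N,M;I)=\min\{s\geq 1\mid E(N,M;I)_n=0 \text{ for all } n\geq s+1\}$, we get $\rt(I;M/N)=\sar(N,M;I)$. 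For the sifted type: $\SD(I;M/N)=\{1\}\cup\{n\geq 2\mid E(I;M/N)_n\neq 0\}=\{1\}\cup\{n\geq 2\mid E(N,M;I)_n\neq 0\}$, again by the isomorphism, and taking cardinalities gives $\st(I;M/N)=1+\card\{n\geq 2\mid E(N,M;I)_n\neq 0\}=\mar(N,M;I)$ by the very definition of the medium Artin-Rees number.

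So the proof is essentially a two-line argument: invoke \eqref{eqkey}, kill the left term using $\rt(I;M)=1$, read off that $\varphi$ is an isomorphism in every degree $n\geq 2$, and then unwind the definitions. I do not anticipate any genuine difficulty; the only thing worth double-checking is a degree-$1$ bookkeeping issue. The invariants $\rt$ and $\st$ are forced to be at least $1$ by convention (the $\{1\}$ is thrown into $\SD$ and the ``$\min$'' ranges over $r\geq 1$), and likewise $\sar$ and $\mar$ are at least $1$; since the isomorphism $E(I;M/N)_n\cong E(N,M;I)_n$ only concerns $n\geq 2$, one should note that the degree-$1$ contributions on both sides are handled identically by these conventions, so no discrepancy arises. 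With that remark in place the equalities $\sar(N,M;I)=\rt(I;M/N)$ and $\mar(N,M;I)=\st(I;M/N)$ follow, and one may also observe consistency with Propositions~\ref{prop-arnumbers} and~\ref{stmenor} (the displayed chains of inequalities collapse to equalities, as expected).
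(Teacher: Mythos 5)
Your proposal is correct and follows essentially the same route as the paper: kill the left term of the exact sequence \eqref{eqkey} using $E(I;M)_n=0$ for $n\geq 2$, conclude $E(I;M/N)_n\cong E(N,M;I)_n$, and unwind the definitions of $\sar$, $\rt$, $\mar$ and $\st$. Your extra remarks on exactness at the middle term and on the degree-$1$ conventions are accurate but not needed beyond what the paper already does.
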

\begin{proof}
Indeed, by hypothesis, $E(I;M)_n=0$, for all $n\geq 2$. Using the exact sequence \eqref{eqkey}, we deduce that
$E(I;M/N)_n=E(N,M;I)_n$, for all $n\geq 2$. Hence, 
\begin{eqnarray*}
\rt(I;M/N)&=&\min\{\, r\geq 1\mid E(I;M/N)_{n}=0, \mbox{ for all } n\geq r+1\}\\
&=&\min\{\, r\geq 1\mid E(N,M;I)_{n}=0, \mbox{ for all } n\geq r+1\}=\sar(N,M;I)\mbox{ and}\\
\st(I;M/N)&=&1+\card\{n\geq 2\mid E(I;M/N)_n\neq 0\}\\&=&1+\card\{n\geq 2\mid E(N,M;I)_n\neq 0\}=\mar(N,M;I). 
\end{eqnarray*}
\end{proof}

Summarizing the previous results, we have the following:

\begin{theorem}\label{thm-summ}
If $A$ is a noetherian ring, for any ideal $I$ in $A$ and any pair of finitely generated $A$-modules $N\subseteq M$, the following diagram of inequalities holds:
\begin{eqnarray*}
\begin{matrix}
\war(N,M;I) & \leq & \mar(N,M;I) &\leq & \sar(N,M;I)\\ 
&& \vertleq && \vertleq \\
&& \st(I;M/N) & \leq & \rt(I;M/N).
\end{matrix}
\end{eqnarray*}
\end{theorem}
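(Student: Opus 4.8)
The statement is a compendium of inequalities already available in this section, so the plan is simply to place each of the five comparisons appearing in the diagram and cite its source. The top row $\war(N,M;I)\leq\mar(N,M;I)\leq\sar(N,M;I)$ is exactly Proposition~\ref{prop-arnumbers}. The bottom row $\st(I;M/N)\leq\rt(I;M/N)$ is the Lemma preceding Proposition~\ref{stmenor}, specialized to $P=M/N$. For the two vertical comparisons, the left one, $\mar(N,M;I)\leq\st(I;M/N)$, is the first inequality of Proposition~\ref{stmenor}, and the right one, $\sar(N,M;I)\leq\rt(I;M/N)$, is the first inequality of the chain~\eqref{arrt} (that is, of \cite[Theorem~2]{fp-crelle}).

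It is worth recording where these pieces ultimately come from, since that is where the real work lies. The two vertical inequalities, as well as the comparison $\sar(N,M;I)\leq\rt(I;M/N)$, all flow from the exact sequence~\eqref{eqkey} of Proposition~\ref{prop-key}, which controls the effective $n$-relations of $\reesw{I}{M/N}$ in terms of those of $\reesw{I}{M}$ and the Artin-Rees module $E(N,M;I)_n$: counting the nonzero graded pieces yields the $\st$ versus $\mar$ statement, and tracking when they vanish yields the $\rt$ versus $\sar$ statement. The chain $\war\leq\mar\leq\sar$ in the top row, on the other hand, is the purely elementary containment argument already carried out in the proof of Proposition~\ref{prop-arnumbers}.

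Finally, I would note that the diagram imposes no transitivity obligation beyond these five inequalities: it asserts no relation between $\sar(N,M;I)$ and $\st(I;M/N)$, and the two routes from $\war(N,M;I)$ to $\rt(I;M/N)$---through $\mar(N,M;I)$ and $\st(I;M/N)$, or through $\mar(N,M;I)$ and $\sar(N,M;I)$---are automatically compatible, since both collapse by transitivity to $\war(N,M;I)\leq\rt(I;M/N)$. Consequently there is no genuine obstacle; the proof is bookkeeping, and its substance has all been established in Propositions~\ref{prop-arnumbers}, \ref{prop-key} and~\ref{stmenor}.
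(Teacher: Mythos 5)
Your proposal is correct and matches the paper exactly: the theorem is stated there as a summary with no separate proof, the five inequalities being precisely Proposition~\ref{prop-arnumbers} (top row), the lemma $\st(I;P)\leq\rt(I;P)$ with $P=M/N$ (bottom row), Proposition~\ref{stmenor} (left vertical), and the first inequality of~\eqref{arrt} (right vertical). Nothing further is needed.
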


Before finishing the section we emphasize the following fact.

\begin{remark}
The notion of a module of effective relations, as well as its linked concepts of relation type and sifted type, 
can readily be extended to any standard module (see, e.g., \cite{gp} or \cite{fp-crelle}). 
It is of great interest to consider the associated graded module
$\agrw{I}{M}=\oplus_{n\geq 0}I^nM/I^{n+1}M$
of an ideal $I$ of $A$ with respect to an $A$-module $M$ and its module of effective $n$-relations $E(\agrw{I}{M})_n$. 
It is shown that, for every $n\geq 2$, there is an exact sequence of $A$-modules (see \cite[Proposition~3.3]{fp-camb}
or \cite[Proposition~3.2]{gp}):
\begin{eqnarray}\label{eq-agr}
E(I;M)_{n+1}\stackrel{\psi_n}{\longrightarrow}E(I;M)_n\rightarrow E(\agrw{I}{M})_n\to 0.
\end{eqnarray}
If $I$ is generated by $x_1,\ldots,x_r$, then the morphism $\psi_n$ sends the class of an effective relation 
$\sum_{|\alpha|=n+1}m_\alpha T^{\alpha}$ in $M[T_1\ldots,T_r]$
to the class of the element
$\sum_{|\alpha|=n+1}x_im_{\alpha}T^{\alpha_i}$ in the module $E(M;I)_n$, where $T_i$ divides $T^{\alpha}$ and 
$T^{\alpha_i}:=T^{\alpha}/T_i$ (see the downgrading morphism in \cite{hsv2}).  

From the exact sequence \eqref{eq-agr}, one deduces that $\rt(\agrw{I}{M})=\rt(I;M)$ 
(see \cite[Proposition~3.3]{fp-camb} and \cite[Remark~2.7]{fp-crelle}). 
Hence, it is natural to ask 
whether $\st(\agrw{I}{M})$ is equal to $\st(I;M)$. 
We see in Example~\ref{ex-principal} that the answer is negative in general. 
\end{remark}

\section{First Examples}\label{sec-firstEx}

In this section we give some simple examples. We begin by showing that $\st(\agrw{I}{M})$ 
might be strictly smaller than $\st(I;M)$.

\begin{example}\label{ex-principal}
Let $I=(a)$ be a principal ideal of a noetherian local ring $A$ and let $M$ be a finitely generated $A$-module.
Then, $E(I;M)_n=(0:a^n)/(0:a^{n-1})$, where the colon module is defined as $(0:a^n):=\{x\in M\mid a^nx=0\}$. 
In particular, if $E(I;M)_n=0$, then $E(I;M)_m=0$, for all $m\geq n$. Hence, $\st(I;M)=\rt(I;M)$. 

Note that the morphism $\psi_n:E(I;M)_{n+1}\to E(I;M)_n$ defined in \eqref{eq-agr}
sends the class of an element $x\in (0:a^{n+1})$ to the class of $ax$ in $E(I;M)_n=(0:a^n)/(0:a^{n-1})$. Thus, 
the image of $E(I;M)_{n+1}$ through $\psi_n$ is
\begin{eqnarray*}
\psi_n(E(I;M)_{n+1})=\frac{a(0:a^{n+1})+(0:a^{n-1})}{(0:a^{n-1})}\subseteq \frac{(0:a^n)}{(0:a^{n-1})}=E(I;M)_n.   
\end{eqnarray*}
Now, suppose that $A=\Bbbk[\![X]\!]/(X^p)$ and let $a=\overline{X}$ be the class of $X$ in $A$. Then, 
$(0:a^n)=A$, for all $n\geq p$. Thus, $E(I)_n=0$, for all $n\geq p+1$. Moreover,
for all $2\leq n\leq p$, $(0:a^n)=(a^{p-n})$ and $E(I)_n=(a^{p-n})/(a^{p-n+1})$, so $\rt(I;M)=p$. Furthermore, 
$\psi_n(E(I;M)_{n+1})=(a(a^{p-n-1})+(a^{p-n+1}))/(a^{p-n+1})=E(I;M)_n$. Thus,
$\psi_n$ is surjective and $E(\agrw{I}{M})_n=0$, for all $2\leq n\leq p-1$. Thus, $\st(\agrw{I}{M})=2$, whereas
$\st(I;P)=\rt(I;M)=p$.
\end{example}

\begin{example}\label{classical} (See \cite[Example~5.1]{mp})
Let $(A,\mathfrak{m})$ be a noetherian local ring. Let $a_1,a_2$ a regular
$A$-sequence and $p\geq 2$. Set $x_{1}=a_{1}^{p}$, $x_{2}=a_{2}^{p}$
and $y=a_{1}a_{2}^{p-1}$. Let $I$ be the ideal generated by
$x_{1},x_{2},y$. Then, 
\begin{eqnarray*}
\SD(I)=\{1,2,\ldots,p\}\phantom{+}\mbox{ and }\phantom{+}\st(I)=\rt(I)=p.
\end{eqnarray*}
Indeed, set $V=A[X_{1},X_{2},Y]$ and let $\varphi:V\to
\rees{I}$ be the presentation of $\rees{I}$ sending $X_{i}$ to $x_it$
and $Y$ to $yt$. A minimal generating set of the ideal
$\ker(\varphi)$ is obtained from:
\begin{itemize}
\item a unique equation
  $F_n(X_1,X_2,Y)=a_1^{p-n}Y^n-a_2^{p-n}X_1X_2^{n-1}$ of
  degree $n$, for each $n$, $2\leq n\leq p$;
\item two equations $F_1(X_1,X_2;Y)=a_1^{p-1}Y-a_2^{p-1}X_1$ and
  $G_1(X_1,X_2,Y)=a_2Y-a_1X_2$ of degree 1.
\end{itemize}
\end{example}

Perturbing the ideal $I$ in Example~\ref{classical} with an extra regular parameter $a_3$ and considering the ideal $J=(a_3)$ we get

\begin{example}\label{wang} (See \cite[Example~6.1]{wang1})
Let $(A,\mathfrak{m})$ be a 3-dimensional regular local ring, $\mathfrak{m}=(a_1,a_2,a_3)$. 
Let $p\geq 2$, $x_1=a_1^p$, $x_2=a_2^p$, $y=a_1a_2^{p-1}+a_3^p$. Let $I$ be the ideal generated by 
$x_1,x_2,y$ and let $J$ be the ideal generated by $a_3$. 
Then, 
\begin{eqnarray*}
\war(J,A;I)=2,&\mar(J,A;I)=p,&\sar(J,A;I)=p,\\
&\st(I;A/J)=p,&\rt(I;A/J)=p.
\end{eqnarray*}
Since $\rad(I)=\mathfrak{m}$, it follows that $I$ is an
$\mathfrak{m}$-primary ideal. Thus, $\grade(I)=\height(I)=3$. 
By \cite[Corollary~1.6.19]{bh}, $x_1,x_2,y$ 
is a regular $A$-sequence. Thus, $I$ is an ideal of linear type (see, e.g., \cite{hsv2}). 
By Proposition~\ref{prop-linear}, $E(I;A/J)_n=E(J,A;I)_n$, for all $n\geq 2$, so
$\mar(A,J;I)=\st(I;A/J)$ and $\sar(J,A;I)=\rt(I;A/J)$.

Example~\ref{classical} says that $E(I;A/J)_m$ is generated by the coset
$a_1^{p-m}Y^m-a_2^{p-m}X_1X_2^{m-1}$, for all $2\leq m\leq p$. Moreover, 
$E(I;A/J)_n=0$, for all $n\geq p+1$. By Proposition~\ref{prop-key}, 
we deduce that the class of $u_m:=a_1^{p-m}y^m-a_2^{p-m}x_1x_2^{m-1}$
spans $E(J,A;I)_m=I^m\cap J/I(I^{m-1}\cap J)$, 
for all $2\leq m\leq p$. Furthermore,
$E(J,A;I)_n=0$, for all $n\geq p+1$. Therefore, $\mar(A,J;I)=\st(I;A/J)=p$ and
$\sar(J,A;I)=\rt(I;A/J)=p$.

Let us prove that $\war(J,A;I)=2$. If we see that
$I^m\cap J\subseteq I^{m-2}J$, for all $2\leq m\leq p$, then,
using the equalities in \eqref{eq-delta}, it follows that, for all $n\geq p+1$, 
\begin{eqnarray*}
I^n\cap J=I^{n-p}(I^p\cap J)\subseteq I^{n-p}(I^{p-2}J)=I^{n-2}J.
\end{eqnarray*}
So, let us prove that $I^m\cap J\subseteq I^{m-2}J$, for all $2\leq m\leq p$.
For such an $m$, we have
\begin{eqnarray*}
I^m\cap J=u_mA+I(I^{m-1}\cap J).     
\end{eqnarray*}
In particular, if $2\leq m\leq p$,  
\begin{eqnarray*}
I^m\cap J=u_mA+I(I^{m-1}\cap J)&=&u_mA+u_{m-1}I+I^2(I^{m-2}\cap J)\\
&=&u_mA+u_{m-1}I+\cdots +u_{3}I^{m-3}+I^{m-2}(I^2\cap J).    
\end{eqnarray*}
Hence, it is enough to see that $u_m\in I^{m-2}J$, for all $2\leq m\leq p$. 
A computation with {\sc Singular} \cite{Sing} says that 
\begin{eqnarray*}
&&u_m=a_1^{p-m}y^m-a_2^{p-m}x_1x_2^{m-1}=\sum_{j=0}^{m-2}v_j(x_2^{m-2-j}y^j)a_3\in (x_2,y)^{m-2}J,\mbox{ where}\\
&&v_0=a_1^{p-1}a_2^{p-m+1}a_3^{p-1},\phantom{+}v_1=a_1^{p-2}a_2^{p-m+2}a_3^{p-1},\phantom{+}\ldots,
v_{m-3}=a_1^{p-m+2}a_2^{p-2}a_3^{p-1}\mbox{ and }\\
&&v_{m-2}=(2a_1^{p-m+1}a_2^{p-1}a_3^{p-1}+a_1^{p-m}a_3^{2p-1}).
\end{eqnarray*}
\end{example}

Consider in Example~\ref{classical} an extra parameter $a_3$ and the ideal $J=(a_3)$, which is 
``normally transversal to $I$'' (see \cite[Definition~5.5.1 and Proposition~5.5.7]{wolmer2}). We have

\begin{example}\label{s<rt} 
Let $A=\Bbbk[a_1,a_2,a_3]$ be the polynomial ring over a field $\Bbbk$. 
Let $p\geq 2$, $x_{1}=a_{1}^{p}$, $x_{2}=a_{2}^{p}$, $y=a_{1}a_{2}^{p-1}$. 
Let $I$ be the ideal generated by $x_{1},x_{2},y$ and let $J$ be the ideal generated by $a_3$. Then 
\begin{eqnarray*}
\war(J,A;I)=1,&\mar(J,A;I)=1,&\sar(J,A;I)=1,\\
&\st(I;A/J)=p,&\rt(I;A/J)=p.
\end{eqnarray*}
By Example~\ref{classical}, $\st(I;A/J)=\rt(I;A/J)=p$. Clearly, 
$I^n\cap J=I^nJ=I(I^{n-1}\cap J)$, for all $n\geq 2$, 
so $E(J,A;I)_n=0$ and $\war(J,A;I)=\mar(J,A;I)=\sar(J,A;I)=1$.
\end{example}

\begin{example}\label{classical-variant} (See \cite[Example~5.2]{mp})
Let $(A,\mathfrak{m})$ be a noetherian local ring. Let $a_1,a_2$ a regular
$A$-sequence and let $p\geq 5$ be an odd integer. Set
$x_{1}=a_{1}^{p}$, $x_{2}=a_{2}^{p}$ and $y=a_{1}^2a_{2}^{p-2}$. Let
$I$ be the ideal generated by $x_{1},x_{2},y$. Then, 
\begin{eqnarray*}
\SD(I)=\{1,2,\ldots,(p-1)/2,(p+1)/2,p\},\phantom{+}\st(I)=(p+3)/2\phantom{+}\mbox{ and }\phantom{+}\rt(I)=p. 
\end{eqnarray*}
Concretely, set $V=A[X_{1},X_{2},Y]$ and let $\varphi:V\to
\rees{I}$ be the presentation of $\rees{I}$ sending $X_{i}$ to $x_it$
and $Y$ to $yt$. A minimal generating set of equations of $\rees{I}$ is
\begin{itemize}
\item an equation $F_p(X_1,X_2,Y)=Y^p-X_1^2X_2^{p-2}$ of degree
  $p$;
\item an equation $F_n(X_1,X_2,Y)=a_2Y^n-a_1X_1X_2^{n-1}$ of degree
  $n=(p+1)/2$;
\item an equation
  $F_n(X_1,X_2,Y)=a_1^{p-2n}Y^n-a_2^{p-2n}X_1X_2^{n-1}$ of degree $n$,
  for each $n$, $2\leq n\leq (p-1)/2$;
\item two equations $F_1(X_1,X_2;Y)=a_1^{p-2}Y-a_2^{p-2}X_1$ and
  $G_1(X_1,X_2,Y)=a_2^2Y-a_1^2X_2$ of degree 1.
\end{itemize}
\end{example}

Perturbing the ideal $I$ in Example~\ref{classical-variant} with an extra regular parameter $a_3$ and considering the ideal $J=(a_3)$
we get

\begin{example}
Let $(A,\mathfrak{m})$ be a 3-dimensional regular local ring, $\mathfrak{m}=(a_1,a_2,a_3)$. 
Let $p\geq 5$ be an odd integer, $x_{1}=a_{1}^{p}$, $x_{2}=a_{2}^{p}$, $y=a_{1}^2a_{2}^{p-2}+a_3^p$. 
Let $I$ be the ideal generated by $x_{1},x_{2},y$ and let $J$ be the ideal generated by $a_3$. Then, 
\begin{eqnarray*}
\war(J,A;I)=2,&\mar(J,A;I)=(p+3)/2,&\sar(J,A;I)=p,\\
&\st(I;A/J)=(p+3)/2,&\rt(I;A/J)=p.
\end{eqnarray*}
The argument is similar to that of Example~\ref{wang}. Since $I$ is an ideal of linear type,  
$\mar(J,A;I)=\st(I;A/J)$ and $\sar(J,A;I)=\rt(I;A/J)$. Example~\ref{classical-variant} gives the value of
$\st(I;A/J)$ and $\rt(I;A/J)$. 
\end{example}

Now, consider in Example~\ref{classical-variant} an extra parameter $a_3$ and the ideal $J=(a_3)$, which is 
normally transversal to $I$. Then, we get

\begin{example}\label{s,st<rt} 
Let $A=\Bbbk[a_1,a_2,a_3]$ be the polynomial ring over a field $\Bbbk$. 
Let $p\geq 2$, $x_{1}=a_{1}^{p}$, $x_{2}=a_{2}^{p}$, $y=a_{1}^2a_{2}^{p-2}$. 
Let $I$ be the ideal generated by $x_{1},x_{2},y$ and let $J$ be the ideal generated by $a_3$. Then, 
\begin{eqnarray*}
\war(J,A;I)=1,&\mar(J,A;I)=1\phantom{+++++},&\sar(J,A;I)=1,\\
&\st(I;A/J)=(p+3)/2\phantom{+},&\rt(I;A/J)=p.
\end{eqnarray*}
This follows from Example~\ref{classical-variant} and the fact that for all $n\geq 1$, $I^n\cap J=I^nJ$.
\end{example}

\begin{remark}\label{remark-dist}
It would be interesting to find an example of a triple $(J,A;I)$ for which
all five invariants of Theorem~\ref{thm-summ} were distinct. 
\end{remark}
The following easy lemma says where (not) to start when looking for such an example. 
\begin{lemma}
Let $I$ and $J$ be two ideals of $A$. 
\begin{itemize}
\item[$(1)$] If $\war(J,A;I)=1$ and $J\subset I$, then $\mar(J,A;I)=\sar(J,A;I)=1$.
\item[$(2)$] If $I$ and $J$ are normally transversal, then 
\[\war(J,A;I)=\mar(J,A;I)=\sar(J,A;I)=1.\]
\item[$(3)$] If $\sar(J,A;I)=1$, then $\rt(I;A/J)\leq \rt(I)$ and $\st(I;A/J)\leq \st(I)$.
\end{itemize}
\end{lemma}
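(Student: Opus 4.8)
The plan is to prove each item by reducing it to the invariants already controlled by Proposition~\ref{prop-arnumbers}, Proposition~\ref{prop-key}, and the inequalities in \eqref{arrt} and Proposition~\ref{stmenor}, together with elementary manipulations of the Artin-Rees modules $E(J,A;I)_n$.

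\emph{Part (1).} Assume $\war(J,A;I)=1$ and $J\subseteq I$. The hypothesis $\war(J,A;I)=1$ means $I^nM\cap N\subseteq I^{n-1}N$ for all $n\geq 2$, i.e., in our setting $I^n\cap J\subseteq I^{n-1}J$ for all $n\geq 1$. First I would show that this already forces $I^n\cap J=I(I^{n-1}\cap J)$ for all $n\geq 2$, hence $E(J,A;I)_n=0$ for all $n\geq 2$, which gives $\mar(J,A;I)=\sar(J,A;I)=1$. For the claimed equality: given $z\in I^n\cap J$, by the weak property $z\in I^{n-1}J\subseteq I\cdot I^{n-2}J$; but one must land inside $I(I^{n-1}\cap J)$, not merely $I\cdot I^{n-1}$. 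The key observation is that $I^{n-1}J\subseteq I^{n-1}\cap J$ (using $J\subseteq I$, so $I^{n-1}J\subseteq I^{n-1}\cap J$ since $I^{n-1}J\subseteq I^{n-1}$ trivially and $I^{n-1}J\subseteq J$), and then $z\in I^{n-1}J=I\cdot(I^{n-2}J)\subseteq I(I^{n-1}\cap J)$ provided $I^{n-2}J\subseteq I^{n-1}\cap J$; iterating, the weak property in a chain $I^n\cap J\subseteq I^{n-1}J\subseteq \dots$ combined with $J\subseteq I$ pins $I^n\cap J$ between $I(I^{n-1}\cap J)$ and itself. I would write this as the clean two-line inclusion $I^n\cap J\subseteq I^{n-1}J\subseteq I(I^{n-2}J)\subseteq I(I^{n-1}\cap J)\subseteq I^n\cap J$, valid for $n\geq 2$ because $J\subseteq I$ yields $I^{n-2}J\subseteq I^{n-2}\cdot I=I^{n-1}$ and $I^{n-2}J\subseteq J$, hence $I^{n-2}J\subseteq I^{n-1}\cap J$.

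\emph{Part (2).} If $I$ and $J$ are normally transversal, then by definition (\cite[Definition~5.5.1]{wolmer2}) $I^n\cap J=I^nJ$ for all $n\geq 1$. Then $I^n\cap J=I^nJ=I(I^{n-1}J)=I(I^{n-1}\cap J)$ for all $n\geq 2$, so $E(J,A;I)_n=0$ for $n\geq 2$, giving $\sar(J,A;I)=1$; by Proposition~\ref{prop-arnumbers} the other two are also $1$. (Part (2) is in fact a special case of Part (1), since $I^n\cap J=I^nJ\subseteq I^{n-1}J\subseteq I^{n-1}$ forces the weak number to be $1$, but one still needs $J\subseteq I$ for Part (1); here one argues directly instead.)

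\emph{Part (3).} Assume $\sar(J,A;I)=1$, i.e., $E(J,A;I)_n=0$ for all $n\geq 2$. Apply the exact sequence \eqref{eqkey} of Proposition~\ref{prop-key} with $M=A$ and $N=J$: for each $n\geq 2$ we get a surjection $E(I;A)_n\to E(I;A/J)_n\to E(J,A;I)_n\to 0$ together with the fact that the image of $E(I;A)_n$ covers the kernel of the map to $E(J,A;I)_n$; since $E(J,A;I)_n=0$, the first map $E(I)_n\to E(I;A/J)_n$ is surjective. Hence, for every $n\geq 2$, $E(I)_n=0$ implies $E(I;A/J)_n=0$. From this, $\rt(I;A/J)\leq\rt(I)$ is immediate from the definition of the relation type as the last nonvanishing degree, and $\SD(I;A/J)\subseteq\SD(I)$ gives $\st(I;A/J)\leq\st(I)$. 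Alternatively, both inequalities follow at once by substituting $\sar(N,M;I)=1$ into \eqref{arrt} and the second inequality of Proposition~\ref{stmenor} (with $N=J$, $M=A$), since $\mathrm{max}(\rt(I),1)=\rt(I)$ and $\st(I)+1-1=\st(I)$; I would mention this as the quickest route.

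The main obstacle, such as it is, lies in Part (1): one must be careful that the weak property delivers containment in $I^{n-k}J$, and one needs the hypothesis $J\subseteq I$ precisely to re-absorb the stray factors of $I$ back into the Artin-Rees module $I^{n-1}\cap J$ rather than merely $I^{n-1}$. Everything else is a direct bookkeeping exercise with the already-established exact sequence and inequalities.
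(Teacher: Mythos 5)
Your proposal is correct and follows essentially the same route as the paper: in (1) the hypothesis $J\subseteq I$ gives $I^{n-2}J\subseteq I^{n-1}\cap J$ so the chain of inclusions closes into the equality $I^n\cap J=I(I^{n-1}\cap J)$, in (2) normal transversality gives the same equality directly, and in (3) the paper uses exactly your ``quickest route'' of substituting $\sar(J,A;I)=1$ (hence $\mar(J,A;I)=1$ by Proposition~\ref{prop-arnumbers}) into \eqref{arrt} and Proposition~\ref{stmenor}. Your additional direct argument for (3) via surjectivity of $E(I)_n\to E(I;A/J)_n$ in the exact sequence \eqref{eqkey} is also valid, but adds nothing beyond the inequalities already established.
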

\begin{proof} 
If $\war(J,A;I)=1$, then $I^n\cap J\subseteq I^{n-1}J$, for all $n\geq 1$. 
Since $J\subset I$, it follows that $I^n\cap J=I^{n-1}J$, for all $n\geq 2$. Therefore, for all $n\geq 2$, 
\begin{eqnarray*}
I^n\cap J=I^{n-1}J=I(I^{n-2}J)=I(I^{n-1}\cap J).
\end{eqnarray*}
Thus, $E(J,A;I)_n=0$, for all $n\geq 2$ and $\mar(J,A;I)=\sar(J,A;I)=1$. This proves $(1)$. 
If $I$ and $J$ are normally transversal, then, for all $n\geq 2$, $I^n\cap J=I^nJ=I(I^{n-1}\cap J)$,  
so $E(J,A;I)_n=0$ and $\war(J,A;I)=\mar(J,A;I)=\sar(J,A;I)=1$, which proves $(2)$. Finally, $(3)$ follows from
the rightmost inequality in \eqref{arrt} and Propositions~\ref{prop-arnumbers} and \ref{stmenor}.
\end{proof}

\section{A Fibonacci-like set of sifted degrees}\label{sec-fibo}

In the polynomial ring $A=\Bbbk[a_1,a_2]$ over a field $\Bbbk$, consider an ideal $I$ generated by 3 monomials 
of the same degree with no common factor. One can assume, without loss of generality, that $I$ is generated by $x_{1}=a_{1}^{p}$, $x_{2}=a_{2}^{p}$ and $y=a_{1}^ua_{2}^{p-u}$ for two positive integers $p$ and $u$ such that 
$\gcd(u,p)=1$ and 
$u<p/2$.

\begin{proposition}\label{3monomials} 
Let $A=\Bbbk[a_1,a_2]$ be the polynomial ring over a field $\Bbbk$. Let $p,u\in\mathbb{N}$, 
$p\geq 3$, $1\leq u<p/2$, and $\gcd(u,p)=1$. 
Let $x_{1}=a_{1}^{p}$, $x_{2}=a_{2}^{p}$, $y=a_{1}^ua_{2}^{p-u}$. 
Let $I$ be the ideal generated by $x_{1}=a_{1}^{p}$, $x_{2}=a_{2}^{p}$ and $y=a_{1}^ua_{2}^{p-u}$.
Let $q_1,\ldots,q_{n-1}$ be the quotients in Euclid's algorithm applied to $r_0:=p$ and $r_1:=u$, i.e., for every 
$1\leq i\leq n-1$, $r_{i-1}=q_ir_i+r_{i+1}$, with $q_i,r_{i+1}\in \mathbb{N}$, and $r_0>r_1\ldots>r_{n-1}>r_n=0$. 
Then, 
\begin{eqnarray}\label{sdi}
\SD(I)=\{d_{1,1},\ldots,d_{1,q_1}\}\cup\{d_{2,1},\ldots,d_{2,q_2}\}\cup\ldots\cup\{d_{n-1,1},\ldots,d_{n-1,q_{n-1}}\},
\end{eqnarray}
where each subset $\{d_{i,1},\ldots,d_{i,q_i}\}$ is a finite arithmetic progression with:
\begin{itemize}
\item $d_{1,j}-d_{1,j-1}=1$ and $d_{1,1}=1$;
\item $d_{2,j}-d_{2,j-1}=d_{1,q_1}$ and $d_{2,1}=d_{1,q_1}+1$;
\item $d_{i,j}-d_{i,j-1}=d_{i-1,q_{i-1}}$ and $d_{i,1}=d_{i-1,q_{i-1}}+d_{i-2,q_{i-2}}$, for every 
$3\leq i\leq n-1$.
\end{itemize}
Moreover, $d_{n-1,q_{n-1}}=p$, so $\st(I)=q_1+\cdots+q_{n-1}$ and $\rt(I)=p$.
\end{proposition}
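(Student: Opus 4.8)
The plan is to reduce the statement to the minimal bigraded free resolution of $\rees{I}$ and then to unwind a continued-fraction recursion. First I would fix the symmetric presentation $\varphi\colon S:=A[X_1,X_2,Y]\to\rees{I}$, $X_1\mapsto x_1t$, $X_2\mapsto x_2t$, $Y\mapsto yt$, with $L:=\ker\varphi$, and bigrade $S$ by $\deg a_i=(1,0)$ and $\deg X_1=\deg X_2=\deg Y=(0,1)$. Applying the isomorphism $\ker(\rho_n)/\mathbf{S}_1(F)\ker(\rho_{n-1})\cong E(I)_n$ recalled in Section~\ref{sec-invariants} (\cite[Lemma~2.3 and Definition~2.4]{fp-crelle}) to $\rho=\varphi$ yields $E(I)_n\cong L_n/\big((AX_1+AX_2+AY)\cdot L_{n-1}\big)$, the subscript denoting the degree in $X_1,X_2,Y$. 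Hence $E(I)_n\neq 0$ precisely when a minimal generating set of the bigraded ideal $L$ contains an equation of degree $n$ in $X_1,X_2,Y$, i.e. when $\beta_{1,(j,n)}(\rees{I})\neq 0$ for some $j$. Since $L$ always contains the two linear syzygies $a_2^{p-u}X_1-a_1^{p-u}Y$ and $a_1^uX_2-a_2^uY$, we have $1\in\SD(I)$, so $\SD(I)$ is exactly the set of $X$-degrees occurring in a minimal generating set of $L$, and consequently $\rt(I)=\max\SD(I)$ and $\st(I)=\card\SD(I)$.

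Next I would feed in the minimal bigraded free resolution of $\rees{I}$ over $S$ constructed in \cite{cda} (which applies precisely to ideals generated by three monomials of a common degree with no common factor, the setting of the Proposition). From its first free module one reads off all bidegrees of a minimal generating set of $L$, and in particular the set of $X$-degrees occurring. The structural point to extract from \cite{cda} is that these minimal generators split into $n-1$ families indexed by the steps $i=1,\ldots,n-1$ of Euclid's algorithm on $(r_0,r_1)=(p,u)$: the $i$-th family consists of $q_i$ equations arising from reducing $x_1=a_1^p$, $x_2=a_2^p$, $y=a_1^ua_2^{p-u}$ along that step, and its $X$-degrees form the arithmetic progression $d_{i,1}<\cdots<d_{i,q_i}$ with first term and common difference as in the statement; the last family ends with the sole equation $Y^p-X_1^uX_2^{p-u}$, of $X$-degree $d_{n-1,q_{n-1}}$. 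Granting this, $\SD(I)=\bigcup_{i=1}^{n-1}\{d_{i,1},\ldots,d_{i,q_i}\}$, and it remains only to analyse the recursion.

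For that, I would introduce the convergent denominators of $p/u$, namely $k_{-1}=0$, $k_0=1$ and $k_i=q_ik_{i-1}+k_{i-2}$; since the $q_i$ are the partial quotients of $p/u$ and $\gcd(u,p)=1$, standard continued-fraction theory gives $k_{n-1}=p$. Unrolling the recursion in the statement --- the cases $i=1,2$ directly and $i\geq 3$ by induction --- produces the closed form $d_{i,j}=j\,k_{i-1}+k_{i-2}$ for $1\leq i\leq n-1$ and $1\leq j\leq q_i$. In particular $d_{i,q_i}=k_i$, so $d_{n-1,q_{n-1}}=k_{n-1}=p$, while $d_{i+1,1}=k_i+k_{i-1}>k_i=d_{i,q_i}$. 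Hence each block is strictly increasing and consecutive blocks are disjoint, so the $d_{i,j}$ are pairwise distinct; therefore $\max\SD(I)=d_{n-1,q_{n-1}}=p$ and $\card\SD(I)=q_1+\cdots+q_{n-1}$, which is exactly $\rt(I)=p$ and $\st(I)=q_1+\cdots+q_{n-1}$.

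The hard part will be the middle step: turning the explicit resolution of \cite{cda} into the ``Euclidean block'' picture, i.e. checking that for each step $i$ exactly $q_i$ minimal equations occur, precisely at the $X$-degrees $j\,k_{i-1}+k_{i-2}$ ($1\leq j\leq q_i$), with none missing and none extraneous, and in particular that $Y^p-X_1^uX_2^{p-u}$ realizes the largest $X$-degree $p$. The first and last steps are, respectively, the routine passage to resolution language and an elementary continued-fraction bookkeeping; once the block structure is in hand, the values of $\SD(I)$, $\st(I)$ and $\rt(I)$ drop out.
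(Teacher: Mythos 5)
Your proposal is correct and follows essentially the same route as the paper: both reduce the description of $\SD(I)$ to the minimal generators of the Rees ideal computed in \cite{cda} (you rightly flag this extraction as the real work, which the paper likewise compresses into ``a careful reading of \cite[Theorem~2.2 and Proposition~5.6]{cda}''), and then both conclude by continued-fraction bookkeeping. The only difference is cosmetic: where the paper proves $d_{n-1,q_{n-1}}=p$ by an explicit double induction with the auxiliary sequences $\delta_i,\mu_i$ and the identity $\delta_iu-\mu_ip=(-1)^{i-1}r_i$, you obtain the same continuant identity $k_{n-1}=p$ by citing the standard theory of convergents (your $k_i$ are in fact the convergent \emph{numerators} of $p/u$, not denominators, but the recursion and conclusion are correct), and you additionally make explicit the disjointness of the blocks, which the paper leaves implicit.
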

\begin{proof}
Observe that $n\geq 2$ and, since $u<p/2$, $q_1\geq 2$. Furthermore, $q_i\geq 1$, for every $2\leq i\leq n-1$. 
A minimal generating set of the defining ideal of the Rees algebra of $I$ is given in \cite[Theorem~2.2]{cda}. 
In order to describe the degrees of these generators, i.e., the set $\SD(I)$ of sifted degrees of $I$, one has to focus on the sequence $(\vert\sigma_i-\tau_i\vert)$ introduced in \cite{cda} and, in particular, use 
\cite[Proposition 5.6]{cda} to check that for each sifted degree $\delta\geq 2$ of $I$ there is exactly one equation of degree $\delta$. A careful reading leads directly to the equality \eqref{sdi}.

Now, let us prove that $d_{n-1,q_{n-1}}=p$. 
For all $1\leq i\leq n-1$, let
$\delta_{i+1}$ be the last element in the arithmetic sequence 
$\{d_{i,1},\ldots,d_{i,q_i}\}$, i.e., $\delta_{i+1}=d_{i,q_{i}}$.
Note that,
for $2\leq i\leq n-1$, $d_{i,q_{i}}=d_{i,1}+(q_i-1)d_{i-1,q_{i-1}}$. On the other hand, 
$d_{i,1}=d_{i-1,q_{i-1}}+d_{i-2,q_{i-2}}$ if $i\geq 3$, and $d_{2,1}=d_{1,q_{1}+}1$.
Thus, for all $2\leq i\leq n-1$, one has that
$d_{i,q_i}=d_{i-2,q_{i-2}}+q_id_{i-1,q_{i-1}}$ if one sets $q_0=0$ and $d_{0,0}=1$, i.e., $\delta_{i+1}=q_i\delta_i+\delta_{i-1}$. By setting $\delta_0=0$ and $\delta_1=1$, one has that this inductive relation also holds for $i=1$, and the sequence $\{\delta_0,\ldots,\delta_n\}$
can also be defined recursively by 
\[\delta_0=0,\ \delta_1=1, \hbox{ and } \delta_{i+1}=q_i\delta_i+\delta_{i-1}, \hbox{ for all } 1\leq i\leq n-1\,.\]
Similarly, one can define a sequence $\{\mu_0,\ldots,\mu_n\}$ by setting
$\mu_0=1$, $\mu_1=0$, and considering the same inductive relation $\mu_{i+1}=q_i\mu_i+\mu_{i-1}$, for all $1\leq i\leq n-1$.
Note that the sequence $\{\delta_0,\ldots,\delta_n\}$ is completely determined by the 
quotients $\{q_1,\ldots,q_{n-1}\}$ of the Euclid's algorithm applied to $r_0=p$ and $r_1=u$. Moreover, since $\mu_2=\mu_0=1$, the sequence $\{\mu_1,\ldots,\mu_n\}$ is exactly the $\delta$-sequence  associated to the shorter list of quotients $\{q_2,\ldots,q_{n-1}\}$, i.e., the sequence of quotients obtained from Euclid's algorithm applied to $r'_0=u$ and $r'_1=r_2$ that has, hence, one division less.

On the other hand, one can easily show by induction that, for all $0\leq i\leq n$,
\[\delta_{i}u-\mu_i p=(-1)^{i-1}r_i\,.\]
For $i=n$, this gives the relation $\delta_n u=\mu_n p$.

Now, let us prove $\delta_n=p$ by induction on $n$. If $n=2$, then, necessarily, $u=1$, which corresponds to Example~\ref{classical}, so $\SD(I)=\{1,2,\ldots,p\}$ and, in particular, the largest integer in $\SD(I)$ is $p$. 
Applying the inductive hypothesis to the sequence $\{\mu_1,\ldots,\mu_n\}$, which corresponds to the Euclid's algorithm with $r'_0=u$ and $r'_1=r_2$, and which has one division less, we obtain $\mu_n=u$. Using the equality $\delta_n u=\mu_n p$, we deduce $\delta_n=p$.

From the description of $\SD(I)$ in \eqref{sdi} and the equality $d_{n-1,q_{n-1}}=p$, we deduce that $\st(I)=q_1+\cdots+q_{n-1}$ and $\rt(I)=p$.
\end{proof}

One can now play with possible quotients $q_1,\ldots,q_{n-1}$ in Euclid's algorithm to obtain several interesting families.

\begin{remark}\label{remu1}
Suppose that $u=1$ in Proposition~\ref{fibonacci}, which coincides with Example~\ref{classical}. Clearly, $u=1$ is equivalent to 
$n=2$. In this case, Euclid's algorithm has only one division, $r_0=q_1r_1+0$, with $r_0=p$, $r_1=u=1$ and $q_1=p$. Hence, $\SD(I)=\{1,2,\ldots,p\}$ and $\st(I)=\rt(I)=p$, as said before in Example~\ref{classical}. 
\end{remark}

\begin{remark}\label{remu2}
If $u\geq 2$ in Proposition~\ref{fibonacci}, then Euclid's algorithm applied to $r_0=p$ and $r_1=u$ will have at least two divisions (i.e., $n\geq 3$).
Suppose that $u=2$. Assume that $p$ is odd and $p\geq 5$, 
so that $1\leq u<p/2$ and $\gcd(u,p)=1$. Euclid's algorithm applied to $r_0=p$ and $r_1=2$ 
gives $r_0=q_1r_1+r_2$, for $q_1=(p-1)/2$ and $r_2=1$; and $r_1=q_2r_2$, for $q_2=2$. 
Thus, $n=3$, there are two divisions and $\SD(I)$ is the union of two finite arithmetic progressions. Note that $d_{1,q_1}=q_1=(p-1)/2$, 
$d_{2,1}=d_{1,q_1}+1=q_1+1=(p+1)/2$ and $d_{2,2}=d_{2,1}+d_{1,q_1}=p$. Therefore,
\begin{eqnarray*}
\SD(I)=\{1,2,\ldots,(p-1)/2\}\cup\{(p+1)/2,p\},\phantom{+}\st(I)=(p+3)/2\phantom{+}\mbox{and}\phantom{+}\rt(I)=p,
\end{eqnarray*}
as already observed in Example \ref{classical-variant}.
\end{remark}

\begin{example}\label{exup1}
In Proposition~\ref{fibonacci} take $p\geq 7$ odd and $u=(p-1)/2$. 
Note that $\gcd(u,p)=1$ (otherwise, a common factor to $u$ and $p$ would also be a factor of $p-u=(p+1)/2$ and this is impossible since $(p-1)/2$ and $(p+1)/2$ are consecutive integers). 
Thus, $1\leq u<p/2$ and $\gcd(u,p)=1$.
Euclid's algorithm applied to $r_0=p$ and $r_1=(p-1)/2$ gives $r_0=q_1r_1+r_2$, for $q_1=2$ and $r_2=1$; and $r_1=q_2r_2$, for $q_2=(p-1)/2$. 
So, $n=3$, there are two divisions and $\SD(I)$ is the union of two finite arithmetic progressions. 
Concretely, 
\begin{eqnarray*}
\SD(I)=\{1,2\}\cup\{2i+1\mid 1\leq i\leq (p-1)/2\},\phantom{+}\st(I)=(p+3)/2\phantom{+}\mbox{and}\phantom{+}\rt(I)=p.    
\end{eqnarray*}
Observe that while $\st(I)$ and $\rt(I)$ coincide with those in Remark~\ref{remu2}, $\SD(I)$ does not. 
\end{example}

\begin{example}\label{exup2}
In Proposition~\ref{fibonacci} take $p\geq 8$, $p$ a multiple of 4, and $u=(p-2)/2$. As in Example \ref{exup1}, if $p$ and $u$ had a common factor,
it would also be a factor of $p-u=(p+2)/2=u+2$, and hence it should be 2, which is impossible since $u$ is odd. Thus, $1\leq u<p/2$ and $\gcd(u,p)=1$.
Euclid's algorithm applied to $r_0=p$ and $r_1=(p-2)/2$ gives $r_0=q_1r_1+r_2$, for $q_1=2$ and $r_2=2$; $r_1=q_2r_2+r_3$, for $q_2=(p-4)/4$ and $r_3=1$; and $r_2=q_3r_3$, for $q_3=2$. So, $n=4$, there are three divisions and $\SD(I)$ is the union of three finite arithmetic 
progressions. In fact, 
\begin{eqnarray*}
\SD(I)=\{1,2\}\cup\{2i+1\mid 1\leq i\leq p/4\}\cup\{p\},\phantom{+}\st(I)=(p/4)+3\phantom{+}\mbox{and}\phantom{+}\rt(I)=p.      
\end{eqnarray*}
Note that $\rt(I)=4(\st(I)-3)$. 
\end{example}

\begin{example}
In Proposition~\ref{fibonacci} take $p=8(2\ell+1)$, for some $\ell\geq 1$, and $u=(p-4)/4$.
Again, if $p$ and $u$ had a common factor, it would also be a factor of $4u$, and hence it would divide $p-4u=4$, and this is impossible because $u$ is odd.
Thus, $1\leq u<p/2$ and $\gcd(u,p)=1$.
Since $u=4\ell+1$, Euclid's algorithm applied to $r_0=p$ and $r_1=u$ gives $r_0=q_1r_1+r_2$, for $q_1=4$ and 
$r_2=4$; $r_1=q_2r_2+r_3$, for $q_2=\ell$ and $r_3=1$; and $r_2=q_3r_3$, for $q_3=4$. 
Thus, $n=4$, there are three divisions and $\SD(I)$ is the union of three finite arithmetic progressions. One sees that:
\begin{eqnarray*}
\SD(I)=\{1,2,3,4\}\cup\{4i+1\mid 1\leq i\leq \ell\}\cup\{4\ell+5, 8\ell+6, 12\ell+7, 16\ell+8\},      
\end{eqnarray*}
and hence $\st(I)=\ell+8$ and $\rt(I)=16\ell+8=p$. Observe that $\rt(I)=8(2\st(I)-15)$.
\end{example}

\begin{example}\label{fibonacci}
Let $(F_i)_{i\geq 0}$ be the Fibonacci sequence defined by $F_0=0$, $F_1=1$ and $F_{i+2}=F_{i+1}+F_{i}$, for all $i\geq 0$.
Take $p=F_{\ell+2}$ and $u=F_{\ell}$, for some $\ell\geq 4$, in Proposition~\ref{fibonacci}. (The cases $\ell=1,2,3$ have already been considered in the previous families.) Let $I$ be generated by $x_{1}=a_{1}^{F_{\ell+2}}$, $x_{2}=a_{2}^{F_{\ell+2}}$ and $y=a_{1}^{F_{\ell}}a_{2}^{F_{\ell+1}}$. 
As $p-u=F_{\ell+1}$ and two consecutive terms in the Fibonacci sequence are relatively prime, one has that $1\leq u<p/2$ and $\gcd(u,p)=1$.
Applying Euclid's algorithm to $r_0=F_{\ell+2}$ 
and $r_1=F_{\ell}$ (recall that $u<p/2$), one has that $r_0=q_1r_1+r_2$, for $q_1=2$ and $r_2=F_{\ell-1}$;
$r_{i-1}=q_ir_i+r_{i+1}$, for $2\leq i\leq \ell-2$, with $q_i=1$ and $r_{i+1}=F_{\ell-i}$; 
and $r_{\ell-2}=F_{3}=2=q_{\ell-1}r_{\ell-1}$ with $q_{\ell-1}=2$ and $r_{\ell-1}=F_{2}=1$. 
Thus, $n=\ell$, there are $\ell-1$ divisions and $\SD(I)$ is the union of $\ell-3$ singletons and two subsets of
cardinality 2. One can check that 
\begin{eqnarray*}
\SD(I)=\{F_{2},F_3\}\cup\{F_4\}\cup\ldots\cup\{F_{\ell}\}\cup\{F_{\ell+1},F_{\ell+2}\},     
\end{eqnarray*}
and hence $\st(I)=\ell+1$ and $\rt(I)=F_{\ell+2}=p$.
Note that, for $\varphi=\frac{1+\sqrt{5}}{2}$, one has:
\[\rt(I)=F_{\st(I)+1}=\frac{\varphi^{\st(I)+1}-(-\varphi^{-1})^{\st(I)+1}}{\sqrt{5}}.\]
\end{example}

\begin{question}
We would like to characterize the finite subsets $D\subseteq\mathbb{N}$ 
for which there exists an ideal $I$ whose set of sifted degrees is $\SD(I)=D$. In addition, given such a subset $D$, we would like to find 
an ideal $I$ with $\SD(I)=D$. 
For instance, given the Fibonacci-like set $D=\{1,2,3,5,8,13,21\}$, we have seen that $\SD(x^{21},y^{21},x^8y^{13})=D$ 
(see Example~\ref{fibonacci}).
\end{question}

\section{More examples}\label{sec-moreEx}

We finish the present note with three more examples, now of a more theoretical nature. 
The first example is a strong and difficult result due to Simis, Ulrich and Vasconcelos, here restated using our ``sifted type'' terminology.
To our knowledge, this theorem is one of the first results in the literature intended to provide a family of ideals with a large relation type, but with a few distinct degree equations. 
In that sense, it could be thought as a seed to the definition of sifted type.

\begin{example} (See \cite[Theorem~4.10]{suv95})
Let $A$ be a Gorenstein local ring
with infinite residue field and let $I$ be a strongly Cohen-Macaulay ideal of $A$ of height $h\geq 2$ 
with minimal number of generators $n$, analytic spread $\ell$ and reduction number $r$.
Assume that $\ell<n$ and that $I$ satisfies $\mathscr{F}_1$ locally in codimension $n-2$ (see \cite{hsv2} or \cite{suv95}).
If $r\leq n-h$, then $\st(I)=2$ and $\rt(I)=n-h+1$. 
\end{example}

The next example is again due to Simis, Ulrich and Vasconcelos. It serves as
a counterexample to a Conjecture of Valla (see \cite{hmv}), by exhibiting a prime ideal 
generated by analytically independent elements, but not of linear type, nor syzygetic. 

\begin{example}{\rm (See \cite[Proposition~4.5]{suv93})
Let $\mathfrak{p}$ be the homogeneous prime ideal of $A=k[a_1,\ldots,a_9]$
generated by the polynomials:
\begin{eqnarray*}
&
a_5a_7+a_6a_9,\ \ a_3a_6a_8+a_1a_4a_9,\ \
a_1a_4a_5-a_3a_5a_8+a_2a_6a_8-a_1a_8a_9,&\\
&a_3a_5a_6+a_2a_5a_7+a_3a_5a_9+a_1a_9^2,\ \
a_3a_6^2+a_2a_6a_7+a_3a_6a_9-a_1a_7a_9,&\\
&a_3a_4a_6+a_2a_4a_7+a_3a_7a_8+a_3a_4a_9.&
\end{eqnarray*}
Then, $\st(\mathfrak{p})=\rt(\mathfrak{p})=2$. There are eight
linear equations for the Rees algebra, coming from the syzygy module of $\mathfrak{p}$, and one extra generador:}
\begin{eqnarray*}
a_1a_3X_1X_2+a_3X_2X_4-a_2X_2X_5+a_3X_3X_5+a_1a_2X_1X_6-a_1X_4X_6.
\end{eqnarray*}
\end{example}

Finally, this last example, due to Hartshorne, gives an irreducible surface in $\mathbb{C}^4$ which is not a (set theoretically a) complete intersection. 

\begin{example} (See \cite[Example~3.4.2]{hartshorne} and 
\cite[Example~3, p.~247]{geyer}) Let $V$ be the irreducible algebraic affine set $V=\{(s,st,t(t-1),t^2(t-1))\in\mathbb{A}^4\mid s,t\in k\}$, which is not a set-theoretically complete intersection. 
A run with {\sc Singular} says that $V=V(I)$, where $I$ is the ideal defined by the elements:
\[
f_1=x_1x_4-x_2x_3,\; f_2=x_1^2x_3+x_1x_2-x_2^2,\;
f_3=x_3^3+x_3x_4-x_4^2.
\]
Then, $I=(f_1,f_2,f_3)$ is an ideal of linear type of $k[x_1,x_2,x_3,x_4]$. 
Moreover, $I(V)=\mathfrak{p}$, where $\mathfrak{p}$ is the ideal defined by the polynomials $f_1,f_2,f_3$ and $f_4=x_1x_3^2+x_1x_4-x_2x_4$. Then, $\mathfrak{p}$ is a height 2 prime ideal of $k[x_1,x_2,x_3,x_4]$ with $\st(\mathfrak{p})=\rt(\mathfrak{p})=2$. Observe that these calculations contradict the equality $I(V)=I$ stated in Geyer \cite[p. 248]{geyer}.
\end{example}

\end{document}